\documentclass[12pt]{amsart}

\usepackage{amssymb}
\usepackage{amsmath}
\usepackage{amsthm}
\usepackage{amsbsy}
\usepackage{bm}
\usepackage{hyperref}
\newcommand{\comment}[1]{}
\usepackage{fullpage}

\newcommand{\clh}{\mathcal{H}}
\newcommand{\clk}{\mathcal{K}}

\theoremstyle{theorem}
    \newtheorem{theorem}{Theorem}
    \newtheorem{lemma}[theorem]{Lemma}

\theoremstyle{definition} 
    \newtheorem{definition}[theorem]{Definition}

    \newtheorem{remark}[theorem]{Remark}
    \newtheorem{example}[theorem]{Example}
    \newtheorem{exercise}[theorem]{Exercise}









\def\<{\langle}
\def\>{\rangle}

\def\bar{\overline}



\newcommand\mnote[1]{} 
\newcommand\be{\begin{equation*}}

\newcommand\ee{\end{equation*}}

\newcommand\ben{\begin{equation}}
\newcommand\een{\end{equation}}
\newcommand\bes{\begin{eqnarray*}}
\newcommand\ees{\end{eqnarray*}}

\newcommand\bex{\begin{exercise}}
\newcommand\eex{\end{exercise}}
\newcommand\beg{\begin{example}}
\newcommand\eeg{\end{example}}
\newcommand\benu{\begin{enumerate}}
\newcommand\eenu{\end{enumerate}}
\newcommand\beit{\begin{itemize}}
\newcommand\eeit{\end{itemize}}
\newcommand\berk{\begin{remark}}
\newcommand\eerk{\end{remark}}
\newcommand\bdefn{\begin{defintion}}
\newcommand\edefn{\end{definition}}
\newcommand\bthm{\begin{theorem}}
\newcommand\ethm{\end{theorem}}
\newcommand\bprf{\begin{proof}}
\newcommand\eprf{\end{proof}}
\newcommand\blem{\begin{lemma}}
\newcommand\elem{\end{lemma}}

\newcommand{\sm}{{\raise0.3ex\hbox{$\scriptstyle \setminus$}}}



\def\CHI{\mathchoice%
{\raise2pt\hbox{$\chi$}}%
{\raise2pt\hbox{$\chi$}}%
{\raise1.3pt\hbox{$\scriptstyle\chi$}}%
{\raise0.8pt\hbox{$\scriptscriptstyle\chi$}}}
\def\smalloplus{\raise1pt\hbox{$\,\scriptstyle \oplus\;$}}




\numberwithin{equation}{section}

\begin{document}

\title[]{$\Gamma$-unitaries, dilation and a natural example}
%
%
%
\author[Bhattacharyya]{ T. Bhattacharyya}

\address{Department of Mathematics,\\
        Indian Institute of Science,\\
        Bangalore 560012, India}

\email{tirtha@math.iisc.ernet.in}
\author[Sau]{H. Sau}
\address{Department of Mathematics,\\
        Indian Institute of Science,\\
        Bangalore 560012, India}

\email{sau10@math.iisc.ernet.in}
\thanks{MSC2010: Primary:47A20, 47A25}
\thanks{Key words and phrases: Symmetrized bidisk, Spectral set, Gamma contraction, Gamma unitary, Dilation.}
\thanks{The authors' research is supported by Department of Science and
Technology, India through the project numbered SR/S4/MS:766/12 and
University Grants Commission Centre for Advanced Studies.}
\date{\today}
\maketitle

\begin{abstract}
 This note constructs an explicit normal boundary dilation for a commuting pair $(S,P)$ of bounded operators with the symmetrized bidisk $$\Gamma=\{(z_1+z_2,z_1z_2):|z_1|,|z_2| \leq 1\}$$ as a spectral set.
 Such explicit dilations had hitherto been constructed only in the unit disk \cite{sfr}, the unit bidisk \cite{ando} and in the tetrablock \cite{sir and me1}. The dilation is minimal and unique under a suitable condition. This paper also contains a natural example of a $\Gamma$-isometry. We compute its associated fundamental operator.
\end{abstract}

\section{Introduction}
This section contains the background and the statements of two main results.

In 1951, von Neumann proved the inequality  $$||f(T)|| \leq \text{sup}\{ |f(z)|: |z| \leq 1 \},$$ where $T$ is a Hilbert space contraction and $f$ is a polynomial. A proof, different from that of von Neumann, emerged when Sz-Nagy proved his dilation theorem: every contraction $T$ can be dilated to a unitary $U$, i.e., if $T$  acts on $\mathcal{H}$, then there is a Hilbert space $\mathcal{K} \supset \mathcal{H}$ and a unitary $U$ on $\mathcal{K}$ such that
$$T^n=P_\mathcal{H}U^n|_\mathcal{H}.$$
\\
Indeed, the proof of von Neumann's inequality then is
$$||f(T)||=||P_\mathcal{H}f(U)|_\mathcal{H}||_\mathcal{H} \leq ||f(U)||_\mathcal{K} \leq \text{sup}\{ |f(z)|: |z| \leq 1 \}$$
because $f(U)$ is a normal operator with $\sigma(f(U))=\{ f(z): z \in \sigma (U) \} \subset \{ f(z): |z|=1 \}$.

It has been a theme of long research whether the converse direction can be pursued. This means that one chooses a compact subset $K$ of the plane or of $\mathbb{C}^d$ for $d >1$, considers a $d$-tuple $\underline{T}=(T_1,T_2,\dots T_d)$ of commuting bounded operators that satisfies
$$ ||f(\underline{T})|| \leq \text{sup}\{ |f(z)|: z \in K \} $$
for all rational functions $f$ with poles off $K$ and tries to see if there is a commuting tuple of bounded normal operators $\underline{N}=(N_1,N_2,\dots N_d)$ with $\sigma$({${\underline{N}}$})$\subset bK$, the distinguished boundary of $K$ such that
$$ f(\underline{T}) = P_\mathcal{H} f(\underline{N})|_\mathcal{H}.$$
The tuple $\underline{N}$ is then called a normal boundary dilation. An explicit construction of such an $\underline{N}$ has succeeded, apart from in the disk \cite{sfr}, only in the bidisk\cite{ando}, although the existence of a dilation is abstractly known for an annulus\cite{agler-ann}.

Here, the symmetrized bidisk $\Gamma$ is polynomially convex. Then, by Oka-Weil theorem, a polynomial dilation is the same as a rational dilation. In other words, $$ T_1^{k_1}\cdots T_d^{k_d}=P_\mathcal{H}N_1^{k_1}\cdots N_d^{k_d}|_\mathcal{H}$$
for $k_1,\dots, k_d \geq 0$.

Consider the class $A(\Gamma)$ of  functions continuous in $\Gamma$ and holomorphic in the interior of $\Gamma$. A boundary of $\Gamma$ (with respect to A($\Gamma$)) is a subset on which every function in $A(\Gamma)$ attains its maximum modulus. It is known that there is a smallest one among such boundaries. This particular smallest one is called the \textit{distinguished boundary} of the symmetrized bidisk and is denoted by $b\Gamma$. It is well-known that $b\Gamma$ is the symmetrization of the torus, i.e., $b\Gamma=\{ (z_1+z_2,z_1z_2): |z_1|=1=|z_2|\}$.
\\
\begin{definition}\label{gamma, iso and uni}
A $\Gamma$-contraction is a commuting pair of bounded operators $(S,P)$ on a Hilbert space $\mathcal{H}$ such that set $\Gamma$ is a spectral set for $(S,P)$, i.e., $$||f(S,P)|| \leq \text{sup}\{ |f(s,p)| : (s,p) \in \Gamma \},$$ for any polynomial $f$ in two variables.
\end{definition}
\begin{definition}\label{gamma, iso and uni1}
A $\Gamma$-unitary $(R,U)$ is a commuting pair of bounded normal operators on a Hilbert space $\mathcal{H}$ such that $\sigma(R,U) \subset b\Gamma$ (this is automatically a $\Gamma$-contraction).
\end{definition}
\begin{definition}\label{gamma, iso and uni2}
A $\Gamma$-isometry is the restriction of a $\Gamma$-unitary to a joint invariant subspace.
\end{definition}

Work of the first author and other co-authors showed in \cite{Sourav da's 1st} that given a $\Gamma$-contraction $(S,P)$, there exists a unique operator $F \in \mathcal{B}(\mathcal{D}_P)$ with numerical radius no greater than 1 which satisfies the fundamental equation
\begin{eqnarray}\label{Maa9}
S-S^*P=D_PFD_P,
\end{eqnarray}
where $D_P=(I-P^*P)^\frac{1}{2}$ is the defect operator of the contraction $P$ and $\mathcal{D}_P=\overline{Ran}D_P$ (The second component of a $\Gamma$-contraction is always a contraction.) This operator $F$ is called the \textit{fundamental operator} of the $\Gamma$-contraction $(S,P)$. The defining criterion of a $\Gamma$-contraction implies that the adjoint pair $(S^*,P^*)$ is also a $\Gamma$-contraction. Consider its fundamental operator $G \in \mathcal{B}(\mathcal{D}_{P^*})$, where $D_{P^*}=(I-PP^*)^\frac{1}{2}$ is the defect operator and $\mathcal{D}_{P^*}=\overline{Ran}D_{P^*}$ is its defect space. This $G$ satisfies
\begin{eqnarray}\label{Maa11}
S^*-SP^*=D_{P^*}GD_{P^*}.
\end{eqnarray}
Our first major result is the construction of $\Gamma$-unitary dilation of a $\Gamma$-contraction explicitly. So let $(S,P)$ be a $\Gamma$-contraction on $\mathcal{H}$. Let $F$  and $G$ be the fundamental operators of the $\Gamma$-contractions $(S,P)$ and $(S^*,P^*)$ respectively. The $\Gamma$-isometry, discovered in \cite{Sourav da's 1st}, that dilates $(S,P)$ is described below. The space is $\tilde{\mathcal{H}}= \mathcal{H} \oplus \mathcal{D}_P \oplus \mathcal{D}_P \oplus \cdots$ which is the same as the minimal isometric dilation space of the contraction $P$. In fact, the second component $V$ of the $\Gamma$-isometric dilation $(T_F,V)$ is the minimal isometric dilation of $P$. So
\begin{eqnarray*}\label{isomatrx}
V=
\left(
\begin{array}{c|cccc}
P & 0 & 0  & 0 &\cdots \\
\hline
D_P & 0 & 0 & 0 &\cdots  \\
0 & I & 0 & 0& \cdots \\
0 & 0 & I & 0 & \cdots\\
\vdots & \vdots & \vdots & \vdots &\ddots
\end{array} \right).
\end{eqnarray*}
The first component $T_F$ is
\begin{eqnarray*}
\left(
\begin{array}{c|cccc}
S & 0 & 0  & 0 &\cdots \\
\hline
F^*D_P & F & 0 & 0 &\cdots  \\
0 & F^* & F & 0& \cdots \\
0 & 0 & F^* & F & \cdots\\
\vdots & \vdots & \vdots & \vdots &\ddots
\end{array} \right).
\end{eqnarray*}
The $\Gamma$-unitary dilation is obtained by extending the $\Gamma$-isometry above. Note that by Definition \ref{gamma, iso and uni2}, every $\Gamma$-isometry is the restriction of a $\Gamma$-unitary to a joint invariant subspace. So the existence of a $\Gamma$-unitary dilation of $(S,P)$ is guaranteed the moment one produces a $\Gamma$-isometric dilation. We construct it below. Just as the $\Gamma$-isometric dilation acts on the space of minimal isometric dilation space of $P$, it turns out that the $\Gamma$-unitary dilation acts on the space of minimal unitary dilation of $P$. For brevity, let us denote $\mathcal{D}_{P^*} \oplus \mathcal{D}_{P^*} \oplus \cdots$ by $l^2(\mathcal{D}_{P^*})$. Note that the isometry $V$ above has a natural unitary extension $U$ on $\tilde{\mathcal{H}} \oplus l^2(\mathcal{D}_{P^*})$. In operator matrix form it is
$$
\left(
                   \begin{array}{cc}
                     V & X^\prime \\
                     0 & Y^\prime \\
                   \end{array}
                 \right)
$$
with respect to the decomposition $\tilde{\mathcal{H}} \oplus l^2(\mathcal{D}_{P^*})$, where $X': l^2(\mathcal{D}_{P^*}) \to \tilde{\mathcal{H}}( = \mathcal{H} \oplus  \mathcal{D}_P \oplus \mathcal{D}_P \oplus \cdots)$ is given by
$$
\left(
                   \begin{array}{cccc}
                     D_{P^*} & 0 & 0 & \cdots\\
                     -P^* & 0 & 0 & \cdots\\
                     0 & 0 & 0 & \cdots\\
                     \vdots & \vdots & \vdots &\ddots
                   \end{array}
                 \right)
$$ and $Y': l^2(\mathcal{D}_{P^*}) \to l^2(\mathcal{D}_{P^*})$ is given by
$$
\left(
\begin{array}{cccc}
0 & I & 0 & \cdots  \\
0 & 0 & I& \cdots \\
0 & 0 & 0 & \cdots\\
\vdots & \vdots & \vdots &\ddots
\end{array} \right)
$$
i.e., the adjoint of the unilateral shift on $l^2(\mathcal{D}_{P^*})$. On the same space, the $\Gamma$-unitary dilation acts. Its first component $R$ is the following extension of $T_F$
$$
\left(
                         \begin{array}{cc}
                           T_F & X \\
                           0 & Y \\
                         \end{array}
                       \right)
$$
with respect to the decomposition $\tilde{\mathcal{H}} \oplus l^2(\mathcal{D}_{P^*})$, where the operators $X: l^2(\mathcal{D}_{P^*}) \to \tilde{\mathcal{H}}$ and $Y: l^2(\mathcal{D}_{P^*}) \to l^2(\mathcal{D}_{P^*})$ are given by
$$
\left(
                   \begin{array}{cccc}
                     D_{P^*}G & 0 & 0 & \cdots\\
                     -P^*G & 0 & 0 & \cdots\\
                     0 & 0 & 0 & \cdots\\
                     \vdots & \vdots & \vdots &\ddots
                   \end{array}
                 \right)
\text{ and }
\left(
\begin{array}{ccccc}
G^* & G & 0 & 0 & \cdots  \\
0 & G^* & G & 0 & \cdots \\
0 & 0 & G^* & G & \cdots\\
\vdots & \vdots & \vdots & \vdots &\ddots
\end{array} \right) \text{ respectively.}
$$
\begin{theorem}\label{first major thm}
The pair $(R, U)$ is a $\Gamma$-unitary dilation of $(S,P)$.
\end{theorem}

Note the similarity of the construction with Sch$\ddot{\text{a}}$ffer's construction in \cite{sfr} of the unitary dilation of a contraction. The crucial inputs are $F$ and $G$ in the construction of $R$. After we completed this work, we came to know that Pal \cite{Sourav da} has independently proved the theorem above.

%
Since the minimal unitary dilation of a contraction is unique, it is natural question to ask about the minimality of the dilation $(R,U)$. Recall that minimality of the unitary dilation $U$ of a contraction $P$ means that the dilation space is
$$
\mathcal{K} = \{U^nh : h \in \mathcal{H} \text{ and } n \in \mathbb{Z}\},
$$
where $U^n$, for a negative $n \in \mathbb{Z}$, means ${U^*}^{|n|}$. The uniqueness means that if $U$ on $\mathcal{K}$ and $\tilde{U}$ on $\mathcal{\tilde{K}}$ are two minimal unitary dilations of $P$ on $\mathcal{H}$, then there is a unitary $W : \mathcal{K} \to \mathcal{\tilde{K}}$ such that $WU=\tilde{U}W$ and $W|_{\mathcal{H}}=I$. We show that the above $\Gamma$-unitary dilation $(R,U)$ of $(S,P)$ is minimal. But uniqueness of minimal dilation usually does not hold good in several variables, e.g., Ando's dilation is known to be not unique, see \cite{Timotin-Li}. However, we show that the dilation constructed above is unique in the sense stated below.
\begin{theorem}[Uniqueness]\label{modified uniqueness reslt}
Let $(S,P)$ be a $\Gamma$-contraction on a Hilbert space $\mathcal{H}$ and $(R,U)$, as defined above, be the $\Gamma$-unitary dilation of $(S,P)$.
\begin{enumerate}
\item[(i)] If $(\tilde{R},U)$ is another $\Gamma$-unitary dilation of $(S,P)$, then $\tilde{R}=R$.
\item[(ii)] If $(\tilde{R},\tilde{U})$ on some Hilbert space $\tilde{\mathcal{K}}$ containing $\mathcal{H}$, is another $\Gamma$-unitary dilation of $(S,P)$ where $\tilde{U}$ is a minimal unitary dilation of $P$, then $(\tilde{R},\tilde{U})$ is unitarily equivalent to $(R,U)$.
\end{enumerate}
\end{theorem}

A great source of examples is function theory, particularly reproducing kernel Hilbert spaces. The Hardy space of the bidisk and the two of its subspaces, the space of symmetric functions of the Hardy space and the space of anti-symmetric functions of the Hardy space have natural $\Gamma$-co-isometries acting on them. We compute their fundamental operators.

\section{Elementary Results On $\Gamma$-Contractions}

This section contains certain preliminary results on $\Gamma$-contractions. Just as
\begin{eqnarray}\label{Maa8}
PD_P=D_{P^*}P
\end{eqnarray}
and its adjoint equation
\begin{eqnarray}\label{Maa10}
D_PP^*=P^*D_{P^*}
\end{eqnarray}
have been known from the time of Sz.- Nagy and Foias, we have a crucial operator equality in the case of a $\Gamma$-contraction $(S,P)$ that relates $S,P$  and the fundamental operator $F$. It is
\begin{eqnarray}\label{Maa7}
D_PS=FD_P+F^*D_PP.
\end{eqnarray}
The adjoint form of this equality involves the $\Gamma$-contraction $(S^*,P^*)$ and its fundamental operator $G$. It is
\begin{eqnarray}\label{rmk1}
D_{P^*}S^*=GD_{P^*}+G^*D_{P^*}P^*.
\end{eqnarray}
The next lemma gives a relation between the fundamental operators of  $\Gamma$-contractions $(S,P)$ and $(S^*,P^*)$. This can be found in \cite{Sourav da's 2nd}.
\begin{lemma}\label{lem:2}
Let $(S,P)$ be a $\Gamma$-contraction and $F$, $G$ are fundamental operators of $(S,P)$ and $(S^*,P^*)$ respectively. Then
\begin{eqnarray}\label{Maa6}
 P^*G=F^*P^*|_{\mathcal{D}_{P^*}}.
\end{eqnarray}
\end{lemma}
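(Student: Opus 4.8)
The plan is to verify the operator identity weakly, testing both sides against vectors of the form $D_{P^*}h$ in the domain and $D_Ph'$ in the target, which have dense span in $\mathcal{D}_{P^*}$ and $\mathcal{D}_P$ respectively. First I would observe that both $P^*G$ and $F^*P^*|_{\mathcal{D}_{P^*}}$ are bounded operators from $\mathcal{D}_{P^*}$ into $\mathcal{D}_P$; the fact that $P^*$ carries $\mathcal{D}_{P^*}$ into $\mathcal{D}_P$ is exactly the content of (\ref{Maa10}). By continuity it then suffices to prove $\langle P^*GD_{P^*}h, D_Ph'\rangle = \langle F^*P^*D_{P^*}h, D_Ph'\rangle$ for all $h,h'\in\mathcal{H}$.

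For the left-hand side I would move $P^*$ across the inner product and use the Sz.-Nagy relation (\ref{Maa8}) in the form $PD_P=D_{P^*}P$ to obtain $\langle P^*GD_{P^*}h, D_Ph'\rangle = \langle D_{P^*}GD_{P^*}h, Ph'\rangle$, using that $D_{P^*}$ is self-adjoint. The fundamental equation (\ref{Maa11}) then replaces $D_{P^*}GD_{P^*}$ by $S^*-SP^*$, leaving $\langle P^*(S^*-SP^*)h, h'\rangle$. For the right-hand side I would instead apply (\ref{Maa10}) in the form $P^*D_{P^*}=D_PP^*$, and then invoke the adjoint of the fundamental equation (\ref{Maa9}), namely $D_PF^*D_P=S^*-P^*S$, reducing that side to $\langle (S^*-P^*S)P^*h, h'\rangle$.

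At this stage the two sides have become $\langle (P^*S^*-P^*SP^*)h,h'\rangle$ and $\langle (S^*P^*-P^*SP^*)h,h'\rangle$, so the whole identity collapses to the single equality $P^*S^*=S^*P^*$. This is where I expect the only genuine content to sit: it holds precisely because $S$ and $P$ commute, so that $S^*P^*=(PS)^*=(SP)^*=P^*S^*$. Everything else is bookkeeping with the defect operators, and the main thing to watch is keeping the domains and ranges of $D_P$ and $D_{P^*}$ consistent so that each manipulation is legitimate; the commutativity of the pair $(S,P)$ is what actually powers the lemma.
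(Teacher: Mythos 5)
Your proof is correct and takes essentially the same route as the paper: both arguments test the identity against vectors $D_{P^*}h$ and $D_Ph'$, reduce each term via the Sz.-Nagy defect relations (\ref{Maa8}), (\ref{Maa10}) and the fundamental equations (\ref{Maa9}), (\ref{Maa11}), and finish with the commutativity identity $P^*S^*=S^*P^*$. The only cosmetic difference is that you evaluate the two sides separately, while the paper expands the single inner product $\langle (P^*G-F^*P^*)D_{P^*}h, D_Ph' \rangle$ and shows it vanishes.
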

\begin{proof}
Note that the L.H.S and the R.H.S of (\ref{Maa6}) are operators from $\mathcal{D}_{P^*}$ to $\mathcal{D}_{P}$.
\begin{eqnarray*}
&&\langle (P^*G-F^*P^*)D_{P^*}h, D_Ph' \rangle \\
&=& \langle P^*GD_{P^*}h, D_Ph' \rangle - \langle F^*P^*D_{P^*}h, D_Ph' \rangle \\
&=& \langle D_PP^*GD_{P^*}h, h' \rangle  - \langle D_PF^*P^*D_{P^*}h, h' \rangle \\
&=& \langle P^*D_{P^*}GD_{P^*}h,h' \rangle - \langle D_PF^*D_PP^*h,h' \rangle  \;\; [\text{ using equation (\ref{Maa10})}]\\
&=& \langle D_{P^*}GD_{P^*}h,P h' \rangle - \langle D_PF^*D_PP^*h,h' \rangle \\
&=& \langle (S^*-SP^*)h, P h' \rangle - \langle (S-S^*P)^*P^*h,h' \rangle \;\;[\text{using equation (\ref{Maa9}) and (\ref{Maa11})}]\\
&=& \langle P^*S^*h,h' \rangle - \langle P^*SP^*h,h' \rangle - \langle S^*P^*h,h'\rangle + \langle P^*SP^*h,h' \rangle= 0.
\end{eqnarray*}
Since $h$ and $h'$ were arbitrary, the proof is complete.
\end{proof}
\begin{remark}\label{rmk2}
If one applies Lemma \ref{lem:2} for the $\Gamma$-contraction $(S^*,P^*)$ in place of $(S,P)$, then the result is  $ PF=G^*P|_{\mathcal{D}_{P}}$.
\end{remark}
The next two lemmas give new relations between the fundamental operators of  $\Gamma$-contractions $(S,P)$ and $(S^*,P^*)$.
\begin{lemma}\label{lem:3}
Let $(S,P)$ be a $\Gamma$-contraction on a Hilbert space $\mathcal{H}$. If $F$ and $G$ are fundamental operators of $(S,P)$ and $(S^*,P^*)$ respectively, then
\begin{eqnarray} \label{Maa}
(SD_P-D_{P^*}GP)|_{\mathcal{D}_P}=D_PF.
\end{eqnarray}
\end{lemma}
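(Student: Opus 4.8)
The plan is to verify the claimed operator identity by evaluating both sides on the dense subset $\operatorname{Ran} D_P$ of $\mathcal{D}_P$ and then invoking boundedness. The first thing I would do is explain why the restriction to $\mathcal{D}_P$ is present at all: it is exactly what renders the composition $GP$ meaningful. Indeed, from the Sz.-Nagy relation $PD_P=D_{P^*}P$ in (\ref{Maa8}) one gets $P(\operatorname{Ran} D_P)\subset \operatorname{Ran} D_{P^*}$, and hence $P(\mathcal{D}_P)\subset \mathcal{D}_{P^*}$ by continuity, so $G$ (which lives on $\mathcal{D}_{P^*}$) can legitimately be applied to $Px$ for $x\in\mathcal{D}_P$. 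Both sides of (\ref{Maa}) are thus bounded operators from $\mathcal{D}_P$ into $\mathcal{H}$, and since $\operatorname{Ran} D_P$ is dense in $\mathcal{D}_P$, it suffices to check the equality on vectors of the form $D_Ph$ with $h\in\mathcal{H}$.

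On such a vector the right-hand side is immediate: $D_PFD_Ph=(S-S^*P)h$ by the fundamental equation (\ref{Maa9}). For the left-hand side I would expand
\[
(SD_P-D_{P^*}GP)D_Ph=SD_P^2h-D_{P^*}G(PD_P)h,
\]
and treat the two terms separately. The first term I rewrite using $D_P^2=I-P^*P$, giving $SD_P^2h=Sh-SP^*Ph$. For the second term I use (\ref{Maa8}) again to replace $PD_Ph$ by $D_{P^*}Ph$, so that it becomes $D_{P^*}GD_{P^*}(Ph)=(S^*-SP^*)Ph$ by the fundamental equation (\ref{Maa11}) for $(S^*,P^*)$.

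Putting the pieces together, the left-hand side equals
\[
Sh-SP^*Ph-(S^*Ph-SP^*Ph)=Sh-S^*Ph=(S-S^*P)h,
\]
which matches the right-hand side; this completes the verification on $\operatorname{Ran} D_P$ and hence, by density and boundedness, on all of $\mathcal{D}_P$.

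I do not expect a serious obstacle in the computation itself, which is a short manipulation assembled entirely from the two fundamental equations and the defect identity $D_P^2=I-P^*P$ together with (\ref{Maa8}). The one point that genuinely needs care — and the only conceptual content — is the well-definedness discussed above: one must observe that $GP$ makes sense precisely because $P$ carries $\mathcal{D}_P$ into $\mathcal{D}_{P^*}$, which is exactly why the statement is phrased as a restriction to $\mathcal{D}_P$ rather than as an identity on all of $\mathcal{H}$.
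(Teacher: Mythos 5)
Your proof is correct and follows essentially the same route as the paper: evaluate both sides on $D_Ph$, use $PD_P=D_{P^*}P$ to convert $GPD_Ph$ into $GD_{P^*}Ph$, apply the two fundamental equations (\ref{Maa9}) and (\ref{Maa11}), and conclude by density of $\operatorname{Ran}D_P$ in $\mathcal{D}_P$ and boundedness. Your added remark on why the restriction to $\mathcal{D}_P$ makes $GP$ well defined is a nice clarification the paper leaves implicit, but it does not change the argument.
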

\begin{proof}
Note that the L.H.S and the R.H.S of (\ref{Maa}) are operators from $\mathcal{D}_{P}$ to $\mathcal{H}$.
\begin{eqnarray*}
&& (SD_P-D_{P^*}G P)D_P h \\
&=& S(I-P^*P)h - D_{P^*}G P D_P h \\
&=& Sh-SP^*P h- (D_{P^*}GD_{P^*})P h \\
&=& Sh-SP^*P h- S^*Ph+SP^*P h \\
&=& Sh-S^*P h  = D_PFD_P h,   \text{           for all h $\in$ $\mathcal{H}$.}
\end{eqnarray*}
Since $\mathcal{D}_P=\overline{Ran}D_P$ and the operators are bounded, we are done.
\end{proof}
\begin{remark}\label{rmk3}
If one applies Lemma \ref{lem:3} for the $\Gamma$-contraction $(S^*,P^*)$ in place of $(S,P)$, then the result is  $ S^*D_{P^*}-D_{P}FP^*=D_{P^*}G$.
\end{remark}
\begin{lemma}\label{lem:4}
Let $F$ and $G$ be the fundamental operator of $(S,P)$ and $(S^*,P^*)$ respectively. Then
\begin{eqnarray} \label{Maa1}
(F^*D_{P}D_{P^*}-FP^*)|_{\mathcal{D}_{P^*}}=D_{P}D_{P^*}G-P^*G^*.
\end{eqnarray}
\end{lemma}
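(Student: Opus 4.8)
The plan is to follow exactly the template used for Lemmas~\ref{lem:1}--\ref{lem:3}: both sides of (\ref{Maa1}) are bounded operators from $\mathcal{D}_{P^*}$ to $\mathcal{D}_P$, so it suffices to check that they agree after being applied to a vector of the form $D_{P^*}h$ (such vectors span a dense subspace of $\mathcal{D}_{P^*}$, since $\mathcal{D}_{P^*}=\overline{\operatorname{Ran}}\,D_{P^*}$) and then paired against $D_Ph'$, for arbitrary $h,h'\in\mathcal{H}$. Thus I would reduce the identity to the scalar statement
\[
\langle (F^*D_PD_{P^*}-FP^*)D_{P^*}h,\,D_Ph'\rangle=\langle (D_PD_{P^*}G-P^*G^*)D_{P^*}h,\,D_Ph'\rangle .
\]

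First I would simplify the left-hand side. Using $D_{P^*}^2=I-PP^*$, the relation $P^*D_{P^*}=D_PP^*$ from (\ref{Maa10}), and the adjoint of the fundamental equation, namely $D_PF^*D_P=S^*-P^*S$ obtained from (\ref{Maa9}), the left-hand inner product collapses, after expanding, to $\langle(S^*-P^*S-SP^*+P^*SPP^*)h,h'\rangle$. Then I would treat the right-hand side symmetrically: using (\ref{Maa11}) in the form $D_{P^*}GD_{P^*}=S^*-SP^*$ together with its adjoint $D_{P^*}G^*D_{P^*}=S-PS^*$, the commutation relation $PD_P=D_{P^*}P$ from (\ref{Maa8}), and $D_P^2=I-P^*P$, the right-hand inner product reduces to $\langle(S^*-SP^*-P^*S+P^*PSP^*)h,h'\rangle$.

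Comparing the two reduced expressions, every term matches except the last: the left side yields $P^*SPP^*$ while the right side yields $P^*PSP^*$. The decisive step---and the only place where anything beyond bookkeeping enters---is to invoke the commutativity $SP=PS$ of the pair $(S,P)$, which gives $P^*SPP^*=P^*PSP^*$ and so identifies the two residual terms, completing the proof. I expect the main obstacle to be purely organizational rather than conceptual: one must carry six or seven terms on each side through the two adjoint forms of the fundamental equations without sign errors, keeping careful track of which factor is moved across the inner product at each stage. Once the algebra is arranged, the commutativity of $S$ and $P$ is precisely what is needed to reconcile the expressions, and no appeal to Lemmas~\ref{lem:2} or \ref{lem:3} is required.
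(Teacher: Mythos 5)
Your proposal is correct, and I checked the two reduced expressions you claim: the left side does collapse to $\langle (S^*-P^*S-SP^*+P^*SPP^*)h,h'\rangle$ and the right side to $\langle (S^*-SP^*-P^*S+P^*PSP^*)h,h'\rangle$, which agree precisely because $SP=PS$. However, your route is genuinely different from the paper's. The paper does not pass to inner products at all: it takes a vector $D_{P^*}h$, and by a chain of operator identities transforms the left side directly into the right side, using $D_{P^*}^2=I-PP^*$, equation (\ref{Maa10}), then equation (\ref{Maa7}) of Lemma \ref{lem:1} to replace $F^*D_PP+FD_P$ by $D_PS$, then the adjoint form of equation (\ref{Maa}) of Lemma \ref{lem:3} to replace $F^*D_P$ by $D_PS^*-P^*G^*D_{P^*}$, and finally equation (\ref{Maa11}); so in the paper Lemma \ref{lem:4} is genuinely a corollary of Lemmas \ref{lem:1} and \ref{lem:3}, and commutativity of $S$ and $P$ never appears explicitly (it is hidden inside the proofs of those earlier lemmas). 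Your argument instead follows the inner-product template of Lemmas \ref{lem:1} and \ref{lem:2}: it is self-contained, using only the fundamental equations (\ref{Maa9}) and (\ref{Maa11}), their adjoints, and the defect intertwinings (\ref{Maa8}), (\ref{Maa10}), and it isolates exactly where commutativity of the pair enters as the decisive final step. The trade-off is clear: the paper's proof is shorter and stays at the vector level because it reuses earlier machinery, while yours costs more term-by-term bookkeeping but illuminates the logical dependence --- in particular it shows Lemma \ref{lem:3} is not actually needed for this identity, which the paper's presentation obscures.
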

\begin{proof}
Note that, L.H.S and the R.H.S of (\ref{Maa1}) are operators from $\mathcal{D}_{P^*}$ to $\mathcal{D}_{P}$.
\begin{eqnarray*}
&&(F^*D_{P}D_{P^*}-FP^*)D_{P^*}h\\
&=&F^*D_P(I-PP^*)h-FP^*D_{P^*}h\\
&=& F^*D_P h-F^*D_P PP^*h-FD_P P^*h \;\; [\text{using equation \ref{Maa10}}]\\
&=&F^*D_P h-(F^*D_P P+FD_P)P^*h \\
&=& (F^*D_P-D_PSP^*)h    \;\;\;\;\;\; \;\;\;\;\;\;\;\;\;\; \;\;\;\;\;\;\;\;[\text{ using equation \ref{Maa7}} ]\\
&=& (D_PS^*-P^*G^*D_{P^*})h-D_PSP^*h \;\;[\text{using equation \ref{Maa}}]\\
&=& D_P(S^*-SP^*)h-P^*G^*D_{P^*}h \\
&=& D_P D_{P^*}GD_{P^*}h-P^*G^*D_{P^*}h \\
&=& (D_{P}D_{P^*}G-P^*G^*) D_{P^*}h,  \text{           for all } h \in \mathcal{H}.
\end{eqnarray*}
Since $\mathcal{D}_{P^*}=\overline{Ran}D_{P^*}$ and the operators are bounded, we are done.
\end{proof}

\section{$\Gamma$-Unitary Dilation Of A $\Gamma$-Contraction - Proof of Theorem \ref{first major thm}}

The starting point of the proof of Theorem \ref{first major thm} is the pair $(T_F,V)$ on $\tilde{\mathcal{H}}=\mathcal{H} \oplus l^2(\mathcal{D}_P)$, where
$$ T_F(h \oplus (a_0,a_1,a_2,\dots))=(Sh \oplus (F^*D_Ph+Fa_0,F^*a_0+Fa_1,F^*a_1+Fa_2,\dots))
$$ and
$$
V(h \oplus (a_0,a_1,a_2,\dots))=(Ph \oplus (D_Ph,a_0,a_1,a_2,\dots)).
$$
We know from \cite{Sourav da's 1st} that this pair is a $\Gamma$-isometric dilation for $(S,P)$. So the job reduces to find an explicit $\Gamma$-unitary extension of $(T_F, V)$. For that, it is natural to consider the minimal unitary extension $U$ of $V$ on $\mathcal{K}= \tilde{\mathcal{H}} \oplus l^2({\mathcal{D}_{P^*}})$. The explicit form of $U$ due to Sch$\ddot{\text{a}}$ffer \cite{sfr} is given in Section 1.
Sch$\ddot{\text{a}}$ffer proved that $U$ is the minimal unitary dilation of $P$.


We shall first prove that $(R,U)$ on $\mathcal{K}$, defined in Section 1 is a $\Gamma$-unitary. To be able to do that, we need a tractable characterization of a $\Gamma$-unitary. This can be found in \cite{Sourav da's 1st}. The fourth part of Theorem 2.5 there tells us that a pair of commuting operators $(R, U)$ defined on a Hilbert space $\mathcal{H}$ is a $\Gamma$-unitary if and only if $U$ is unitary and $(R,U)$ is a $\Gamma$-contraction.
So, for our particular $(R,U)$, we shall show that
\begin{enumerate}
\item[(i)] $RU=UR $ \text{ and }
\item[(ii)] $\|f(R,U)\| \leq \|f\|_{\infty, \Gamma}, \text{ for every polynomial $f$ in two variables.}$
\end{enumerate}
To show that
$R=
\begin{pmatrix}
T_F & X \\
0 & Y
\end{pmatrix}$
and
$
U=
\begin{pmatrix}
V & X' \\
0 & Y'
\end{pmatrix}
$ commute, we shall have to show $YY'=Y'Y$ and $XY'+T_FX'=X'Y+VX$.
\begin{eqnarray*}
YY'(a_0,a_1,a_2,\dots) &=&
Y(a_1,a_2,a_3,\dots)
\\
&=& (G^*a_1+Ga_2,G^*a_2+Ga_3,G^*a_3+Ga_4,\dots)
\\
&=&
Y'(G^*a_0+Ga_1,G^*a_1+Ga_2,G^*a_2+Ga_3,\dots)
\\
&=&
Y'Y(a_0,a_1,a_2,\dots).
\end{eqnarray*}
For all $(a_0,a_1,a_2,\dots) \in l^2(\mathcal{D}_{P^*})$ we have
\begin{eqnarray*}
&& (XY'+T_FX')(a_0,a_1,a_2,\dots)
\\
&=& X(a_1,a_2,a_3,\dots) + T_F(D_{P^*}a_0 \oplus (-P^*a_0,0,0,\dots))
\\
&=&
(D_{P^*}Ga_1 \oplus (-P^*Ga_1,0,0,\dots)) + (SD_{P^*}a_0 \oplus ((F^*D_PD_{P^*}-FP^*)a_0,-F^*P^*a_0,0,0,\dots))
\\
&=&
(SD_{P^*}a_0+D_{P^*}Ga_1) \oplus ((F^*D_PD_{P^*}-FP^*)a_0-P^*Ga_1,-F^*P^*a_0,0,0,\dots) \text{ and}
\end{eqnarray*}
\begin{eqnarray*}
&&(X'Y+VX)(a_0,a_1,a_2,\dots)
\\
&=&
X'(G^*a_0+Ga_1,G^*a_1+Ga_2,G^*a_2+Ga_3,\dots)+V(D_{P^*}Ga_0 \oplus (-P^*Ga_0,0,0,\dots))
\\
&=&
((D_{P^*}G^*a_0+D_{P^*}Ga_1)\oplus(-P^*G^*a_0-P^*Ga_1,0,0,\dots)) \\
&& + \; (PD_{P^*}Ga_0 \oplus (D_PD_{P^*}Ga_0,-P^*Ga_0,0,0,\dots))
\\
&=&
((D_{P^*}G^*+PD_{P^*}G)a_0 + D_{P^*}Ga_1) \oplus ((D_PD_{P^*}G-P^*G^*)a_0-P^*Ga_1,-P^*Ga_0,0,0,\dots).
\end{eqnarray*}
The lemmas of the last section will now be useful. By Lemma \ref{lem:4}, Lemma \ref{lem:2} and the equation (\ref{rmk1}), it follows that $XY'+T_FX'=X'Y+VX$.
Thus the proof of commutativity is complete.

We now prove that $R$ is a normal operator. What we first prove is that $R=R^*U$, because this will imply that $R$ is a normal operator. To establish the equality $R=R^*U$, it is equivalent to show the following equalities:

\begin{enumerate}
\item[(a)] $Y=Y^*Y'+X^*X'$,
\item[(b)] $X^*V=0$,
\item[(c)] $X=T_F^*X'$  and
\item[(d)] $T_F=T_F^*V.$
 \end{enumerate}

From the definition of $X$ and $Y$, it is easy to check that
 $$
 X^*(h \oplus (a_0,a_1,a_2,\dots))=(G^*D_{P^*}h-G^*Pa_0,\dots)
 $$
and
$$Y^*(a_0,a_1,a_2,\dots)=(Ga_0,G^*a_0+Ga_1,G^*a_1+Ga_2,\dots).$$
Thus
\begin{eqnarray*}
&&(Y^*Y'+X^*X')(a_0,a_1,a_2,\dots)
\\
&=& Y^*(a_1,a_2,a_3,\dots) +X^*(D_{P^*}a_0 \oplus (-P^*a_0,0,0,\dots))
\\
&=&
(Ga_1,G^*a_1+Ga_2,G^*a_2+Ga_3,\dots) + (G^*(I-PP^*)a_0+G^*PP^*a_0,0,0,\dots)
\\
&=&
(G^*a_0+Ga_1,G^*a_1+Ga_2,G^*a_2+Ga_3,\dots) = Y(a_0,a_1,a_2,\dots),
\end{eqnarray*}
which establishes $(a)$. To prove (b), we use equation (\ref{Maa8}) and see that
\begin{eqnarray*} X^*V(h \oplus (a_0,a_1,a_2,\dots)) & = & X^*(Ph \oplus (D_Ph,a_0,a_1,a_2,\dots)) \\
& = & (G^*D_{P^*}P-G^*PD_P)h,0,0,0,\dots)=0. \end{eqnarray*}
To prove (c), we use Remark \ref{rmk3}, and Lemma \ref{lem:2} to get
\begin{eqnarray*}
T_F^*X'(a_0,a_1,a_2,\dots) & = & T_F^*(D_{P^*}a_0 \oplus (-P^*a_0,0,0,\dots)) \\
& = & (S^*D_{P^*}a_0-D_PFP^*a_0) \oplus (-F^*P^*,0,0,\dots) = X(a_0,a_1,a_2,\dots).\end{eqnarray*}
Since $(T_F,V)$ is a $\Gamma$-isometry, $(d)$ holds, by (\cite{Sourav da's 1st},thm.2.14).

Now we proceed to show that $(R,U)$ satisfies the von-Nuemann inequality. For any polynomial $f$ in two variables we have
$$
f(R,U)=\left(
\begin{array}{cc}
f(T_F,V) & Z_f \\
0 & f(Y,Y') \\
\end{array}
\right),
$$ where $(T_F,V)$ and $(Y,Y')=(M_{G+G^*z},M_z)^*$ are $\Gamma$-contractions and $Z_f$ is an operator depending on $f$. We have  by Lemma 1 of \cite{Perturbation of spectrum} that $\sigma(f(R,U)) \subset \sigma(f(T_F,V)) \cup \sigma(f(Y,Y'))$. Which gives
$$r(f(R,U)) \leq \max \{r(f(T_F,V)) , r(f(Y,Y'))\} \leq \max \{\|f(T_F,V)\| , \|f(Y,Y')\|\} \leq \|f\|_{\infty, \Gamma}.$$
Since $R$ is a normal operator so is $f(R,U)$ and hence we have $r(f(R,U))=\|f(R,U)\|$. This completes the proof of part $(ii)$.
 Hence $(R,U)$ is a $\Gamma$-unitary.

To complete the proof of Theorem \ref{first major thm}, we need to show that $(R,U)$ dilates $(S,P)$. Since both $R$ and $U$ are upper triangular, we have
$
R^mU^n = \begin{pmatrix}
 T_F^mV^n & X_{mn} \\
 0 & Y^m{Y'}^n \\
\end{pmatrix}
$, where $X_{mn}$ is some operator from $l^2(\mathcal{D}_{P^*})$ to $\tilde{\mathcal{H}}$.
Again with respect to the decomposition $\mathcal{H}\oplus l^2({\mathcal{D}_P})$, $T_F$ and $V$ both are lower triangular with $S$ and $P$ in the $(11)$-th entry respectively. So $P_\mathcal{H}{T_F}^mV^n|_\mathcal{H}= S^mP^n$, for every $m,n \geq 0$, where $\mathcal{H}$ is embedded into $\tilde{\mathcal{H}}$ by the map $h \to (h \oplus (0,0,0,\dots))$.
Therefore $P_\mathcal{H}R^mU^n|_\mathcal{H}=S^mP^n$, for every $m,n \geq 0$. This completes the proof of dilation. \qed

\section{minimality and uniqueness}
In this section we prove Theorem \ref{modified uniqueness reslt}. First we remark that the dilation is minimal.

\begin{remark}[Minimality]
Minimality of a commuting normal boundary dilation $\underline{N} = (N_1, N_2, \ldots , N_d)$ on a space $\clk$ of a commuting tuple $(T_1, T_2, \ldots ,T_d)$ of bounded operators on a space $\clh$ means that the space $\clk$ is no bigger than
$$\overline{span} \{ N_1^{k_1} N_2^{k_2} \ldots N_d^{k_d} N_1^{*l_1} N_2^{*l_2} \ldots N_d^{*l_d} h : h \in \clh \mbox{ where } k_i \mbox{ and } l_i \in \mathbb{N} \mbox{ for } i=1,2,\ldots, d \}.$$ Note that the space $\clk$ has to be at least this big. In our construction, the space is just the minimal unitary dilation space of $P$ (which is unique up to unitary equivalence). It is a bit of a surprise that one can find the $\Gamma$-unitary dilation of $(S,P)$ on the same space, while one would have normally expected the dilation space to be bigger. Since no dilation of $(S,P)$ can take place on a space smaller than the minimal unitary dilation space of $P$ (because the dilation has to dilate $P$ as well), our construction of $\Gamma$-unitary dilation is minimal. Indeed, post facto we know from our dilation that
$$\overline{span} \{ R^{m_1} R^{*m_2} U^n h : h \in \clh, m_1, m_2\in \mathbb{N} \mbox{ and } n \in \mathbb{Z} \} = \overline{span} \{ U^n h : h \in \clh \mbox{ and } n \in \mathbb{Z} \}.$$
Note the absence of $R$ on the right side.
\end{remark}

We now prove a weaker version of the uniqueness theorem and then we use it to prove the main result.
\begin{lemma}\label{uniqueness3}
Suppose $(S,P)$ is a $\Gamma$-contraction on a Hilbert space $\mathcal{H}$ and $(R,U)$ is the above $\Gamma$-unitary dilation of $(S,P)$. If $(\tilde{R},U)$ is another $\Gamma$-unitary dilation of $(S,P)$ such that $\tilde{R}$ is an extension of $T_F$, then $\tilde{R}=R$.
\end{lemma}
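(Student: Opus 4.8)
The plan is to use the characterization of $\Gamma$-unitaries quoted just before the proof of Theorem \ref{first major thm}: since $U$ is already known to be unitary, the pair $(\tilde R, U)$ is a $\Gamma$-unitary precisely when $\tilde R U = U\tilde R$, $\tilde R = \tilde R^* U$, and $r(\tilde R)\le 2$. The only one of these I expect to need is the relation $\tilde R = \tilde R^* U$. Because $\tilde R$ extends $T_F$, the subspace $\tilde{\mathcal{H}}$ is invariant under $\tilde R$ and $\tilde R|_{\tilde{\mathcal{H}}} = T_F$; hence, with respect to $\mathcal{K}=\tilde{\mathcal{H}}\oplus l^2(\mathcal{D}_{P^*})$, it has the block form
$$
\tilde R = \begin{pmatrix} T_F & \tilde X \\ 0 & B \end{pmatrix},
$$
with unknown blocks $\tilde X : l^2(\mathcal{D}_{P^*})\to\tilde{\mathcal{H}}$ and $B$ on $l^2(\mathcal{D}_{P^*})$. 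The goal is to show that necessarily $\tilde X = X$ and $B = Y$, which is exactly the assertion $\tilde R = R$.

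First I would expand $\tilde R = \tilde R^* U$ entry by entry, using $U = \begin{pmatrix} V & X' \\ 0 & Y' \end{pmatrix}$. Reading off the $(1,2)$ entry gives $\tilde X = T_F^* X'$. But in the proof of Theorem \ref{first major thm} it was established, as identity (c), that $X = T_F^* X'$; hence $\tilde X = X$ at once, with no further computation. Thus the off-diagonal block of $\tilde R$ is pinned down immediately, and the burden of the proof is entirely on the diagonal block $B$.

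The $(2,2)$ entry of the same equation yields $B = \tilde X^* X' + B^* Y' = X^* X' + B^* Y'$, after substituting $\tilde X = X$. Comparing this with identity (a) from the proof of Theorem \ref{first major thm}, namely $Y = X^* X' + Y^* Y'$, and subtracting, I obtain that $Z := Y - B$ satisfies the single operator equation $Z = Z^* Y'$.

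The main obstacle, and the one genuinely operator-theoretic step, is to deduce $Z = 0$ from $Z = Z^* Y'$. Taking adjoints gives $Z^* = Y'^* Z$, and substituting this back produces $Z = Y'^* Z Y'$; iterating yields $Z = Y'^{*n} Z Y'^n$ for every $n$. Now $Y'$ is the backward shift on $l^2(\mathcal{D}_{P^*})$, so $Y'^n\to 0$ strongly. Hence for all vectors $x,y$ one has $\langle Z x, y\rangle = \langle Z Y'^n x, Y'^n y\rangle$, whose modulus is bounded by $\|Z\|\,\|Y'^n x\|\,\|Y'^n y\|\to 0$; since the left side is independent of $n$, it must vanish, forcing $Z = 0$, i.e. $B = Y$. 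Together with $\tilde X = X$ this gives $\tilde R = R$. I expect the commutativity relation and the spectral radius bound $r(\tilde R)\le 2$ to play no role, the whole argument resting on $\tilde R = \tilde R^* U$, the already-proved identities (a) and (c), and the strong decay of the powers of the backward shift.
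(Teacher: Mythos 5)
Your proof is correct, and it takes a genuinely different route from the paper's for the hard part of the argument, namely pinning down the diagonal block. The paper, after getting $\tilde{X}=T_F^*X'=X$ just as you do, determines the block you call $B$ by explicit computation: it invokes the commutativity $\tilde{R}U=U\tilde{R}$ to get $\tilde{Y}Y'=Y'\tilde{Y}$ and $\tilde{X}Y'+T_FX'=X'\tilde{Y}+V\tilde{X}$, multiplies the latter by $X'^*$ and uses $Y'^*Y'+X'^*X'=I$, $X'^*V=0$ to express $\tilde{Y}^*(I-Y'^*Y')$ concretely; since $I-Y'^*Y'$ is the projection onto the first coordinate, this determines $\tilde{Y}^*$ on vectors $(a_0,0,0,\dots)$ as $(Ca_0,C^*a_0,0,\dots)$ for an explicit operator $C$, which is then shown to equal the fundamental operator $G$ by an inner-product computation using Lemma \ref{lem:2} and Remark \ref{rmk3}; finally the commutation $\tilde{Y}Y'=Y'\tilde{Y}$ propagates this to all coordinates. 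You bypass all of this: reading only the $(2,2)$ entry of $\tilde{R}=\tilde{R}^*U$ and subtracting the already-proved identity (a), $Y=Y^*Y'+X^*X'$, you reduce everything to the single equation $Z=Z^*Y'$ for $Z=Y-B$, and kill $Z$ by the iteration $Z=Y'^{*n}ZY'^n$ together with the strong stability of the backward shift. Each step checks out (in particular $B=\tilde{X}^*X'+B^*Y'$ is the correct $(2,2)$ entry, and $\|Y'^nx\|\to 0$ for every $x$), and your reuse of identities (a) and (c) is legitimate since they are established in the proof of Theorem \ref{first major thm}, which precedes the lemma. What your route buys is economy and a slightly stronger statement: you never use the commutativity of $\tilde{R}$ with $U$, so you actually show that any extension of $T_F$ satisfying $\tilde{R}=\tilde{R}^*U$ alone must equal $R$; what the paper's route buys is explicitness, since it reconstructs $\tilde{Y}$ coordinate-by-coordinate and exhibits how the fundamental operator $G$ of $(S^*,P^*)$ is forced to appear, rather than concluding equality abstractly.
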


\begin{proof}
Suppose $(\tilde{R},U)$ is another $\Gamma$-unitary dilation of $(S,P)$, such that $\tilde{R}$ is an extension of $T_F$.
Since $\tilde{R}$ is an extension of $T_F$, $\tilde{R}$ is of the form
$\left(
  \begin{array}{cc}
    T_F & X \\
    0 & Y \\
  \end{array}
\right)$ with respect to the decomposition $\clk = \tilde{\mathcal{H}} \oplus l^2({\mathcal{D}_{P^*}})$. Since
$ U = \left(
        \begin{array}{cc}
          V & X^\prime \\
          0 & Y^\prime \\
        \end{array}
      \right)$
 is unitary and $\tilde{R}U=U\tilde{R}$, we have from easy matrix calculations the followings:
 \begin{eqnarray}\label{Maa4}
 Y'^*Y'+X'^*X'=I, \;\; X'^*V=0,
 \end{eqnarray} and
\begin{eqnarray}\label{Maa3}
\tilde{Y}Y'=Y'\tilde{Y}, \;\; \tilde{X}Y'+T_FX'=X'\tilde{Y}+V\tilde{X}.
\end{eqnarray}
Also since $(\tilde{R},U)$ is a $\Gamma$-unitary, we have $\tilde{R}={\tilde{R}}^*U$ and that gives $\tilde{X}=T_F^*X'$.
\\
So
\begin{eqnarray*}
\tilde{X}(a_0,a_1,a_2,\dots)&=&T_F^*X'(a_0,a_1,a_2,\dots)
\\
&=&
T_F^*(D_{P^*}a_0 \oplus (-P^*a_0,0,0,\dots))
\\
&=&
(S^*D_{P^*}a_0-D_PFP^*a_0) \oplus (-F^*P^*a_0,0,0,\dots)
\\
&=&
(D_{P^*}Ga_0 \oplus (-F^*P^*a_0,0,0,\dots))\;\;\; [\text{by Remark \ref{rmk3}}]
\\
&=&
X(a_0,a_1,a_2,\dots).
\end{eqnarray*}
Now to find $\tilde{Y}$, we proceed as follows:
\\
From second equation of (\ref{Maa3}) we have
\begin{eqnarray*}
&&X'\tilde{Y}+V\tilde{X}=\tilde{X}Y'+T_FX'
\\
&\Rightarrow&X'^*X'\tilde{Y}+X'^*V\tilde{X}=X'^*\tilde{X}Y'+X'^*T_FX' \;\;\;[\text{   multiplying $X'^*$ from left}]
\\
&\Rightarrow& (I-Y'^*Y')\tilde{Y}=X'^*\tilde{X}Y'+X'^*T_FX'  \;\;\; [\text{ using (\ref{Maa4}) }]
\\
&\Rightarrow&
\tilde{Y}^*(I-Y'^*Y')=Y'^*\tilde{X}^*X'+X'^*T_F^*X'. \;\;\;\;\;\;\;\;\;\;\;\;\;\;\;\;\;......(*)
\end{eqnarray*}
Note that $(I-Y'^*Y')$ is the orthogonal projection of $l^2(\mathcal{D}_{P^*})$ onto the first component. Let $x=(a_0,a_1,a_2,\dots)$ be in $l^2(\mathcal{D}_{P^*})$. From ($*$) we get
$$\tilde{Y}^*(a_0,0,0,\dots) =Y'^*\tilde{X}^*X'(a_0,a_1,a_2,\dots)+X'^*T_F^*X'(a_0,a_1,a_2,\dots).$$
Thus
\begin{eqnarray*}
&&Y'^*\tilde{X}^*X'(a_0,a_1,a_2,\dots)+X'^*T_F^*X'(a_0,a_1,a_2,\dots)
\\
&=&
Y'^*\tilde{X}^*(D_{P^*}a_0 \oplus (-P^*a_0,0,0,\dots)) + X'^*T_F^*(D_{P^*}a_0 \oplus (-P^*a_0,0,0,\dots))
\\
&=&
Y'^*((D_{P^*}S-PF^*D_P)D_{P^*}a_0+PFP^*a_0,0,0,\dots) \\
&& + \;  X'^*((S^*D_{P^*}-D_PFP^*)a_0\oplus(-F^*P^*a_0,0,0,\dots))
\\
&=&
(0,(D_{P^*}S-PF^*D_P)D_{P^*}a_0+PFP^*a_0,0,0,\dots) \\
&& + \; (D_{P^*}(S^*D_{P^*}-D_PFP^*)a_0+PF^*P^*a_0,0,0,\dots)
\\
&=&
(D_{P^*}(S^*D_{P^*}-D_PFP^*)a_0+PF^*P^*a_0,(D_{P^*}S-PF^*D_P)D_{P^*}a_0+PFP^*a_0,0,0,\dots).
\end{eqnarray*}

Let us denote the operator $(D_{P^*}(S^*D_{P^*}-D_PFP^*)+PF^*P^*)|_{\mathcal{D}_{P^*}}$ by $C$.
Then we have $\tilde{Y}^*(a_0,0,0,\dots) =(Ca_0,C^*a_0,0,0,\dots)$.
Note that $C$ is an operator from $\mathcal{D}_{P^*}$ to $\mathcal{D}_{P^*}$. We shall show that $C=G$, where $G$ is the fundamental operator of the $\Gamma$-contraction $(S^*,P^*)$. The following computation establishes that.

For $h,h'$ in $\mathcal{H}$, we have
\begin{eqnarray*}
&&\langle CD_{P^*}h,D_{P^*}h' \rangle \\
&=&
\langle (D_{P^*}(S^*D_{P^*}-D_PFP^*)+PF^*P^*)D_{P^*}h,D_{P^*}h' \rangle \\
&=&
\langle D_{P^*}S^*(I-PP^*)h-D_{P^*}(D_PFD_P)P^*h+PF^*P^*D_{P^*}h, D_{P^*}h' \rangle \\
&=&
\langle D_{P^*}S^*h-D_{P^*}S^*PP^*h-D_{P^*}SP^*h+D_{P^*}S^*PP^*h+PF^*P^*D_{P^*}h,D_{P^*}h'\rangle \\
&=&
\langle D_{P^*}S^*h-D_{P^*}S P^*h+PF^*P^*D_{P^*}h, D_{P^*}h' \rangle \\
&=&
\langle D_{P^*}(S^*-SP^*)h+PF^*P^*D_{P^*}h, D_{P^*}h' \rangle\\
&=&\langle D_{P^*}^2GD_{P^*}h+PF^*P^*D_{P^*}h,D_{P^*}h' \rangle \\
&=&
\langle (I-PP^*)GD_{P^*}h,D_{P^*}h' \rangle + \langle F^*P^*D_{P^*}h,P^*D_{P^*}h' \rangle \\
&=&
\langle GD_{P^*}h,D_{P^*}h' \rangle - \langle P^*GD_{P^*}h,P^*D_{P^*}h' \rangle + \langle F^*P^*D_{P^*}h,P^*D_{P^*}h' \rangle \\
&=&
\langle GD_{P^*}h,D_{P^*}h' \rangle -  \langle F^*P^*D_{P^*}h,D_{P^*}h' \rangle + \langle F^*P^*D_{P^*}h,P^*D_{P^*}h' \rangle \;\;\; [ \text{  by Lemma \ref{lem:2}} ]
\\
&=&
\langle GD_{P^*}h,D_{P^*}h' \rangle.
\end{eqnarray*}
Hence $C=G$ and hence for every $a$ in $\mathcal{D}_{P^*}$,
$$\tilde{Y}^*(a,0,0,0,\dots) = (Ga,G^*a,0,0,\dots).$$

We want to compute the action of $\tilde{Y}^*$ on an arbitrary vector. Now using first equation of (\ref{Maa3}), we have
\begin{eqnarray*}
\tilde{Y}^*(\overbrace{0,\dots,0}^\text{$n$ times},a,0,\dots )
&=& \tilde{Y}^*{Y'^*}^n(a,0,0,0,\dots)
\\
&=&
{Y'^*}^n\tilde{Y}^*(a,0,0,0,\dots)\\
& = &
{Y'^*}^n(Ga,G^*a,0,0,\dots) =
(\overbrace{0,\dots,0}^{\text{$n$ times}},Ga,G^*a,0,0,\dots),
\end{eqnarray*}
for every $n \geq 0$. Therefore for an arbitrary element $(a_0,a_1,a_2,\dots) \in l^2(\mathcal{D}_{P^*})$, we have
\begin{eqnarray*}
& & \tilde{Y}^*(a_0,a_1,a_2,\dots) \\
&=&\tilde{Y}^*((a_0,0,0,\dots) + (0,a_1,0,\dots) + (0,0,a_2,\dots) +\cdots )
\\
&=&
(Ga_0,G^*a_0,0,0,\dots) + (0,Ga_1,G^*a_1,0,0,\dots) + (0,0,Ga_2,G^*a_2,0,0,\dots) + \cdots
\\
&=&
(Ga_0,G^*a_0+Ga_1,G^*a_1+Ga_2,\dots).
\end{eqnarray*}
For $(a_0,a_1,a_2,\dots)$ and $(b_0,b_1,b_2,\dots)$ in $l^2(\mathcal{D}_{P^*})$, we have
\begin{eqnarray*}
&&\langle(a_0,a_1,a_2,\dots),\tilde{Y}^*(b_0,b_1,b_2,\dots) \rangle
\\
&=& \langle (a_0,a_1,a_2,\dots), (Gb_0,G^*b_0+Gb_1,G^*b_1+Gb_2,\dots) \rangle
\\
&=&
\langle a_0,G b_0 \rangle + \langle a_1,G^*b_0+G b_1 \rangle + \langle a_2,G^*b_1+G b_2 \rangle + \cdots
\\
&=&
\langle G^*a_0+Ga_1,b_0 \rangle + \langle G^*a_1+Ga_2,b_1 \rangle + \langle G^*a_2+Ga_3, b_2 \rangle + \cdots
\\
&=&
\langle (G^*a_0+Ga_1,G^*a_1+Ga_2,G^*a_2+Ga_3,\dots), (b_0,b_1,b_2,\dots) \rangle.
\end{eqnarray*}
Hence, by definition of the adjoint of an operator, we have
$$\tilde{Y}(a_0,a_1,a_2,\dots)= (G^*a_0+Ga_1,G^*a_1+Ga_2,G^*a_2+Ga_3,\dots)=Y(a_0,a_1,a_2,\dots),$$ for every $(a_0,a_1,a_2,\dots) \in l^2(\mathcal{D}_{P^*}).$
Therefore $\tilde{R}=R$. Hence the proof.
\end{proof}
Note that when we write the operator $U$ with respect to the decomposition $l^2(\mathcal{D}_P) \oplus \mathcal{H} \oplus l^2(\mathcal{D}_{P^*})$ then this is of the form
$$
\begin{pmatrix}
U_1 & U_2 & U_3 \\
0   &  P  & U_4 \\
0   &  0  & U_5 \\
\end{pmatrix},
$$
where $U_1, U_2, U_3, U_4$ and $U_5$ are defined as follows
\begin{eqnarray*}
&&U_1(a_0,a_1,a_2,\dots)=(0,a_0,a_1,\dots),\; U_2(h)=(D_ph,0,0,\dots)
\\
&&U_3(b_0,b_0,b_2,\dots)=(-P^*b_0,0,0,\dots),\; U_4(b_0,b_1,b_2,\dots)=D_{P^*}b_0 \text{ and}
\\
&&U_5(b_0,b_1,b_2,\dots)=(b_1,b_2,b_3,\dots),
\end{eqnarray*}
for all $h \in \mathcal{H}, (a_0,a_1,a_2,\dots) \in l^2(\mathcal{D}_P)$ and $(b_0,b_0,b_2,\dots) \in l^2(\mathcal{D}_{P^*})$. Note that this is the Sch$\ddot{\text{a}}$ffer minimal unitary dilation of the contraction $P$ as in \cite{sfr}(also can be found in \cite{Nagy-Foias}, sec. 5, ch. 1).
\begin{lemma}\label{added lemma}
Let $(R_1,R_2,\dots,R_{n-1},U)$ on $\mathcal{K}$ be a dilation of $(S_1,S_2,\dots,S_{n-1},P)$ on $\mathcal{H}$, where $P$ is a contraction on $\mathcal{H}$ and $U$ on $\mathcal{K}$ is the Sch$\ddot{\text{a}}$ffer minimal unitary dilation of $P$. Then for all $j=1,2,\dots , n-1$, $R_j$ admits a matrix representation of the form
$$
\begin{pmatrix}
* & * & * \\
0 & S_j & * \\
0 & 0 & * \\
\end{pmatrix},
$$
with respect to the decomposition $\mathcal{K}=l^2(\mathcal{D}_P) \oplus \mathcal{H} \oplus l^2(\mathcal{D}_{P^*})$.
\end{lemma}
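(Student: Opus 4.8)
The plan is to deduce the three vanishing blocks from two invariance statements and to read off the central block from the definition of dilation. Set $\mathcal{L}_+=\bigvee_{n\ge0}U^n\clh$ and $\mathcal{L}_-=\bigvee_{n\ge0}U^{*n}\clh$. From the explicit form of $U$ displayed just before the statement, $\mathcal{L}_+=l^2(\mathcal{D}_P)\oplus\clh$ is the space of the minimal isometric dilation of $P$ obtained by restricting $U$, while $\mathcal{L}_-=\clh\oplus l^2(\mathcal{D}_{P^*})$ is the space carrying the minimal isometric dilation of $P^*$ inside $U^*$; consequently $\mathcal{L}_+^\perp=l^2(\mathcal{D}_{P^*})$ and $\mathcal{L}_-^\perp=l^2(\mathcal{D}_P)$. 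Since $\mathcal{L}_+$ is the sum of the first two coordinates, the inclusion $R_j\mathcal{L}_+\subseteq\mathcal{L}_+$ is precisely the vanishing of the $(3,1)$ and $(3,2)$ blocks; since $\mathcal{L}_-^\perp=l^2(\mathcal{D}_P)$ is the first coordinate, the inclusion $R_j\,l^2(\mathcal{D}_P)\subseteq l^2(\mathcal{D}_P)$ (equivalently $R_j^*\mathcal{L}_-\subseteq\mathcal{L}_-$) is the vanishing of the $(2,1)$ and $(3,1)$ blocks. The $(2,2)$ block is $P_\clh R_j|_\clh=S_j$ by definition of dilation. I use throughout that the dilating tuple commutes, so $R_jU=UR_j$ and hence $R_jU^*=U^*R_j$ because $U$ is unitary, and that the dilated tuple commutes, so $S_jP=PS_j$.

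The heart of the matter is the inclusion $R_j\clh\subseteq\mathcal{L}_+$. I would establish it by testing $R_j h$ against the spanning vectors $U^{-k}h'$ of $\mathcal{L}_-$: for $h,h'\in\clh$ and $k\ge0$,
$$\langle R_jh,\,U^{-k}h'\rangle=\langle R_jU^kh,\,h'\rangle=\langle P_\clh R_jU^kh,\,h'\rangle=\langle S_jP^kh,\,h'\rangle,$$
where the first equality is commutativity and the last is the dilation identity $P_\clh R_jU^k|_\clh=S_jP^k$. On the other hand $\langle S_jh,\,U^{-k}h'\rangle=\langle P_\clh U^kS_jh,\,h'\rangle=\langle P^kS_jh,\,h'\rangle$, and the two sides coincide because $S_jP=PS_j$. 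Thus $R_jh-S_jh$ is orthogonal to every $U^{-k}h'$, hence to $\mathcal{L}_-$, so $R_jh-S_jh\in\mathcal{L}_-^\perp=l^2(\mathcal{D}_P)$. This single computation gives both that the $\clh$-component of $R_jh$ is $S_jh$ (the $(2,2)$ block) and that $R_jh\in l^2(\mathcal{D}_P)\oplus\clh=\mathcal{L}_+$ (the $(3,2)$ block vanishes). Since $R_j$ commutes with $U$ and $\mathcal{L}_+=\bigvee_{n\ge0}U^n\clh$, the inclusion $R_j\clh\subseteq\mathcal{L}_+$ propagates to $R_j\mathcal{L}_+\subseteq\mathcal{L}_+$, which also kills the $(3,1)$ block.

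For the remaining $(2,1)$ block I would run the mirror argument on adjoints. Taking adjoints in $P_\clh R_jU^k|_\clh=S_jP^k$ and using $U^kR_j=R_jU^k$ yields $P_\clh R_j^*U^{*k}|_\clh=P^{*k}S_j^*$, so $(R_j^*,U^*)$ dilates $(S_j^*,P^*)$ with $U^*$ the minimal unitary dilation of $P^*$ and with $\mathcal{L}_-=\bigvee_{k\ge0}U^{*k}\clh$ playing the role of $\mathcal{L}_+$. The identical computation, now pairing $R_j^*h$ against $U^kh'$, gives $R_j^*h-S_j^*h\in\mathcal{L}_+^\perp=l^2(\mathcal{D}_{P^*})$, hence $R_j^*\clh\subseteq\mathcal{L}_-$ and, by commutativity of $R_j^*$ with $U^*$, $R_j^*\mathcal{L}_-\subseteq\mathcal{L}_-$. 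Equivalently $R_j\,l^2(\mathcal{D}_P)\subseteq l^2(\mathcal{D}_P)$, which is the vanishing of the $(2,1)$ block (the $(3,1)$ block having already been obtained). Assembling the four statements places $R_j$ in the claimed upper-triangular form with $S_j$ in the centre.

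The only real obstacle is the first inclusion $R_j\clh\subseteq\mathcal{L}_+$; the trick is to recognize that the correct test vectors are $U^{-k}h'$, so that commutativity slides $R_j$ back onto $\clh$ and the one-variable dilation identity applies. After that, every remaining assertion is either this computation repeated for $U^*$ or a formal consequence of commutativity and of the minimality of $U$. The facts I would flag for citation are the coordinate descriptions $\mathcal{L}_+=l^2(\mathcal{D}_P)\oplus\clh$ and $\mathcal{L}_-=\clh\oplus l^2(\mathcal{D}_{P^*})$, which are exactly what the recalled construction of the minimal unitary dilation provides, together with the standing convention that a dilation of a commuting tuple is commuting (needed for $R_jU=UR_j$) and that the dilated tuple commutes (needed for $S_jP=PS_j$).
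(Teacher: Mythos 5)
Your proof is correct, and its two core computations are in substance the ones in the paper: the paper likewise identifies $\bigvee_{m\ge 0}U^m\clh$ with $\tilde{\clh}=l^2(\mathcal{D}_P)\oplus\clh$ and $\bigvee_{m\ge 0}U^{*m}\clh$ with $\tilde{\mathcal{N}}=\clh\oplus l^2(\mathcal{D}_{P^*})$, then proves $P_\clh R_j|_{\tilde{\clh}}=S_jP_\clh|_{\tilde{\clh}}$ (this is your mirror computation on adjoints, giving the zero $(2,1)$ entry and the central entry $S_j$) and $P_{\tilde{\mathcal{N}}}R_j|_{\clh}=S_j$ (this is your pairing of $R_jh$ against $U^{-k}h'$, giving the zero $(3,2)$ entry). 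The genuine difference is how the bottom-left corner $R^j_{31}$ is handled. The paper does not extract it from these identities; it runs a separate, more technical third step: equating the $(3,1)$ and $(3,2)$ entries of $R_jU=UR_j$ yields $R^j_{31}U_1=U_5R^j_{31}$ and $R^j_{31}U_2=0$, hence $R^j_{31}=U_5^nR^j_{31}U_1^{*n}$ for every $n$, and $R^j_{31}=0$ follows because $U_1^{*n}\to 0$ strongly (a property of the backward shift). You bypass this iteration by noticing that your computations prove invariance statements rather than the vanishing of single entries: $R_jh-S_jh\in l^2(\mathcal{D}_P)$ gives $R_j\clh\subseteq\mathcal{L}_+$, and commutativity with $U$ propagates this to $R_j\mathcal{L}_+\subseteq\mathcal{L}_+$; dually, $R_j^*\clh\subseteq\mathcal{L}_-$ propagates to $R_j^*\mathcal{L}_-\subseteq\mathcal{L}_-$, i.e.\ $R_j\,l^2(\mathcal{D}_P)\subseteq l^2(\mathcal{D}_P)$. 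Either invariance kills $R^j_{31}$ for free. What your route buys is economy and softness: minimality enters only through the spanning property of $\bigvee_{n\ge0}U^n\clh$ and $\bigvee_{n\ge0}U^{*n}\clh$, with no asymptotic shift argument and no entry-by-entry bookkeeping; the paper's route instead makes explicit which structural features of the Schäffer blocks (the range identity for $U_2$ and the strong nullity of $U_1^{*n}$) force the corner to vanish, at the cost of a longer calculation.
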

\begin{proof}
Let $R_j=(R^j_{kl})_{k,l=1}^3$ with respect to $\mathcal{K}=l^2(\mathcal{D}_P) \oplus \mathcal{H} \oplus l^2(\mathcal{D}_{P^*})$ for each $j=1,2,\dots,n-1$. Call $\tilde{\mathcal{H}}=l^2(\mathcal{D}_P) \oplus \mathcal{H}$. Since $U$ is minimal we have $\mathcal{K}=\bigvee_{m=-\infty}^{\infty}U^m\mathcal{H}$ and $\tilde{\mathcal{H}}=\bigvee_{m=0}^{\infty}U^m\mathcal{H}=\bigvee_{m=0}^{\infty}V^m\mathcal{H}$, where $V$ is the minimal isometry dilation of $P$.
Note that
$$
P_{\mathcal{H}}R_j(U^mh)=S_jP^mh= S_jP_{\mathcal{H}}U^mh, \text{ for all $h \in \mathcal{H}, m \in \mathbb{N}$ and $j=1,2,\dots, n-1$.}
$$
Therefore we have $P_{\mathcal{H}}R_j|_{\tilde{\mathcal{H}}}=S_jP_{\mathcal{H}}|_{\tilde{\mathcal{H}}}$ or equivalently $S_j^*=P_{\tilde{\mathcal{H}}}R_j^*|_{\mathcal{H}}$ for all $j=1,2,\dots, n-1$. This shows that $R^j_{21}=0$, for all $j=1,2,\dots,n-1$.
\\
Call $\tilde{\mathcal{N}}=\mathcal{H} \oplus l^2(\mathcal{D}_{P^*})$, then note that $\tilde{\mathcal{N}}=\bigvee_{n=0}^{\infty}{U^*}^n\mathcal{H}$.
We have
$$
P_{\mathcal{H}}R^*_j({U^*}^mh)=S^*_j{P^*}^mh= S^*_jP_{\mathcal{H}}{U^*}^mh, \text{ for all $h \in \mathcal{H}, m \in \mathbb{N}$ and $j=1,2,\dots, n-1$.}
$$
This and a similar argument as above give us $S_j=P_{\tilde{\mathcal{N}}}R_j|_{\mathcal{H}}$. Therefore $R^j_{32}=0$, for all $j=1,2,\dots,n-1$.
\\
So far, we have showed that for each $j=1,2,\dots, n-1$, $R_j$ admits the matrix representation of the form
$$
\begin{pmatrix}
R^j_{11} & R^j_{12} & R^j_{13} \\
0 & S_j & R^j_{23} \\
R^j_{31} & 0 & R^j_{33} \\
\end{pmatrix},
$$
with respect to the decomposition $\mathcal{K}=l^2(\mathcal{D}_P) \oplus \mathcal{H} \oplus l^2(\mathcal{D}_{P^*})$. To show that $R^j_{13}=0$ we proceed as follows:
\\
From the commutativity of $R_j$ with $U$ we get, by an easy matrix calculation
\begin{eqnarray}\label{addition}
R^j_{31}U_1=U_5R^j_{31} \text{ and } R^j_{31}U_2=0,
\end{eqnarray}
(equating the $31^{th}$ entries and $32^{th}$ entries of $R_jU$ and $UR_j$ respectively).
By the definition of $U_2$, we have $RanU_2=Ran(I-U_1U_1^*)$. Therefore $R^j_{31}(I-U_1U_1^*)=0$. Which with the first equation of (\ref{addition}) gives
$R^j_{31} = U_5R^j_{31}U_1^*$. Which gives after n-th iteration $R^j_{31}=U_5^nR^j_{31}{U_1^*}^n$. Now since ${U_1^*}^n \to 0$ as $n \to \infty$, we have that $R^j_{31}=0$ for each $j=1,2,\dots, n-1$. This completes the proof of the lemma.
\end{proof}
Now we are ready to prove Theorem \ref{modified uniqueness reslt}, the main result of this section.\\
\underline{\bf{proof of part (i)}} Since $(\tilde{R},U)$ is a dilation of $(S,P)$, by Lemma \ref{added lemma} we have $\tilde{R}$ of the form
$$
\left(
  \begin{array}{cc}
   T  & \tilde{R}_{12} \\
    0 & \tilde{R}_{22} \\
  \end{array}
  \right)
$$
with respect to the decomposition $\tilde{\mathcal{H}} \oplus l^2(\mathcal{D}_{P^*})$, where $T:\tilde{\mathcal{H}} \to \tilde{\mathcal{H}}$ is of the form
$$
\left(
  \begin{array}{cc}
    T_{11} & T_{12} \\
    0 & S \\
  \end{array}
  \right)
$$
with respect to the decomposition $l^2(\mathcal{D}_P)\oplus \mathcal{H}  $. Since $(T,V)$ on $\tilde{H}$ is the restriction of the $\Gamma$-contraction $(\tilde{R},U)$ to $\tilde{\mathcal{H}}$ and $V$ is an isometry, we have $(T,V)$ a $\Gamma$-isometry. Also note that ${T}^*|_{\mathcal{H}}=S^*$ and $V^*|_{\mathcal{H}}=P^*$. So $(T,V)$ is a $\Gamma$-isometric dilation of $(S,P)$. Also note that $V$ is the Sch$\ddot{\text{a}}$ffer minimal isometric dilation of $P$. Now it follows from the Theorem 4.3(2) of \cite{Sourav da's 1st} that $T=T_F$, where $T_F$ is as in Theorem \ref{first major thm}. Therefore $\tilde{R}$ is an extension of $T_F$. Now the proof follows from Lemma \ref{uniqueness3}.
\\
\underline{\bf{proof of part (ii)}} Since $\tilde{U}$ is a minimal unitary dilation of $P$, there exists a unitary operator $W: \tilde{\mathcal{K}} \to \mathcal{K}$ such that $W\tilde{U}W^*=U$ and $Wh=h$ for all $h \in \mathcal{H}$. This shows that $(W\tilde{R}W^*,W\tilde{U}W^*)$ is another $\Gamma$-unitary dilation of $(S,P)$. But $W\tilde{U}W^*=U$. Hence by part (i) we have $(W\tilde{R}W^*,W\tilde{U}W^*)=(R,U)$. Hence the proof.
\qed
\begin{remark}
As in the case of Ando's dilation of a commuting pair of contractions, a minimal $\Gamma$-unitary dilation of a $\Gamma$-contraction need not be unique (upto unitary equivalence). In this section, we  constructed a particular $\Gamma$-unitary dilation which is the most obvious one because it acts on the minimal unitary dilation space of the contraction $P$. Moreover, if the $\Gamma$-unitary dilation space is no bigger than the minimal unitary dilation space of the contraction $P$, then the $\Gamma$-unitary dilation is unique upto unitary equivalence.
\end{remark}
Let $R$ be an operator on a Hilbert space $\mathcal{K}$. A subspace $\mathcal{H}$ of $\mathcal{K}$ is called semi-invariant for $R$ if $\mathcal{H}=\mathcal{N} \ominus \mathcal{M}$ where both $\mathcal{N}$ and $\mathcal{M}$ are invariant for $R$. It is easy to show that $\mathcal{H}$ is semi-invariant for $R$ if and only if $R$ admits a matrix representation of the form
$$
\begin{pmatrix}
* & * & * \\
0 & S & * \\
0 & 0 & * \\
\end{pmatrix}
$$ on $\mathcal{M} \oplus \mathcal{H} \oplus \mathcal{N}^\perp$, where $S$ is the compression of $R$ to $\mathcal{H}$. Clearly $R$ is a dilation of $S$.
 We end this section with a relevant result which generalizes the Lemma 3.1 in chapter (vi) of \cite{Foias-Frazho}.
\begin{lemma}
Let $(R_1,R_2,\dots,R_n)$ be an n-tuple of commuting operators on a Hilbert space $\mathcal{K}$. Then a subspace $\mathcal{H}$ is a semi-invariant for $(R_1,R_2,\dots,R_n)$ if and only if $(R_1,R_2,\dots,R_n)$ is a dilation of $(S_1,S_2,\dots,S_n)$, where $S_1,S_2,\dots,S_n$ are compressions of $R_1,R_2,\dots,R_n$ respectively to $\mathcal{H}$.
\end{lemma}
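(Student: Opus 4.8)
The plan is to read both notions for the commuting tuple $(R_1,\dots,R_n)$ in the natural word-wise sense that matches the single-operator discussion recorded just before the lemma: $\mathcal{H}=\mathcal{N}\ominus\mathcal{M}$ is \emph{semi-invariant} if $\mathcal{M}\subseteq\mathcal{N}$ and both $\mathcal{M}$ and $\mathcal{N}$ are invariant under every $R_j$ simultaneously, while $(R_1,\dots,R_n)$ \emph{dilates} $(S_1,\dots,S_n)$ means $P_{\mathcal{H}}\,w(R_1,\dots,R_n)|_{\mathcal{H}}=w(S_1,\dots,S_n)$ for every monomial $w$ (equivalently for every word, since the $R_j$ commute), where $S_j=P_{\mathcal{H}}R_j|_{\mathcal{H}}$. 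With these readings the argument is block-triangular bookkeeping in one direction and an explicit construction of two invariant subspaces in the other.

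For the forward implication I would fix the decomposition $\mathcal{K}=\mathcal{M}\oplus\mathcal{H}\oplus\mathcal{N}^{\perp}$ (with $\mathcal{N}=\mathcal{M}\oplus\mathcal{H}$). Invariance of $\mathcal{M}$ kills the $(2,1)$ and $(3,1)$ blocks of each $R_j$, and invariance of $\mathcal{N}$ kills the $(3,1)$ and $(3,2)$ blocks; hence every $R_j$ is block upper triangular with $(2,2)$-entry exactly $S_j=P_{\mathcal{H}}R_j|_{\mathcal{H}}$. The one computation to carry out is that the class of $3\times3$ block upper triangular matrices with this zero pattern is closed under multiplication and that the $(2,2)$-entry of a product is the product of the $(2,2)$-entries; iterating gives $P_{\mathcal{H}}\,R_{i_1}\cdots R_{i_k}|_{\mathcal{H}}=S_{i_1}\cdots S_{i_k}$, which is precisely the dilation property.

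For the converse I would build the two subspaces explicitly. Let $\mathcal{N}=\overline{\operatorname{span}}\{w(R_1,\dots,R_n)h:h\in\mathcal{H},\ w\text{ a monomial}\}$ be the smallest subspace containing $\mathcal{H}$ and jointly invariant under all $R_j$, and set $\mathcal{M}=\mathcal{N}\ominus\mathcal{H}$, so that trivially $\mathcal{H}=\mathcal{N}\ominus\mathcal{M}$ and $\mathcal{N}$ is invariant. The whole content is to prove that $\mathcal{M}$ is invariant under each $R_j$. Since $\mathcal{N}=\mathcal{H}\oplus\mathcal{M}$, the vectors $(I-P_{\mathcal{H}})\,w(R)h$ span $\mathcal{M}$, and for such a generator I would compute $P_{\mathcal{H}}R_j(I-P_{\mathcal{H}})w(R)h=P_{\mathcal{H}}R_jw(R)h-P_{\mathcal{H}}R_j\bigl(P_{\mathcal{H}}w(R)h\bigr)$. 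The dilation identity turns the first term into $S_j\,w(S)h$ (reading $R_jw$ as a single longer monomial), and, because $P_{\mathcal{H}}w(R)h=w(S)h\in\mathcal{H}$, it turns the second term into $S_j\,w(S)h$ as well; the two cancel. Thus $R_j\mathcal{M}\perp\mathcal{H}$, and since $R_j\mathcal{M}\subseteq R_j\mathcal{N}\subseteq\mathcal{N}$ we obtain $R_j\mathcal{M}\subseteq\mathcal{N}\cap\mathcal{H}^{\perp}=\mathcal{M}$, giving invariance of $\mathcal{M}$ and hence semi-invariance of $\mathcal{H}$.

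The main obstacle is exactly this cancellation in the converse: one must check that the dilation hypothesis is strong enough to force both terms to equal $S_j\,w(S)h$, which hinges on the commutativity of the $R_j$ (so that $R_jw(R)$ is again an admissible monomial) and on applying the dilation identity at the appropriate word length. Once this telescoping is secured, closure under limits extends $R_j\mathcal{M}\perp\mathcal{H}$ from the spanning vectors to all of $\mathcal{M}$, and nothing further is required.
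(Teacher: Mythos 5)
Your proposal is correct and takes essentially the same route as the paper: the paper also sets $\mathcal{N}=\bigvee_{i_1,\dots,i_n\ge 0}R_1^{i_1}\cdots R_n^{i_n}\mathcal{H}$ and $\mathcal{M}=\mathcal{N}\ominus\mathcal{H}$, and proves $R_j\mathcal{M}\perp\mathcal{H}$ via the identity $P_{\mathcal{H}}R_j|_{\mathcal{N}}=S_jP_{\mathcal{H}}|_{\mathcal{N}}$ (equivalently $S_j^*=P_{\mathcal{N}}R_j^*|_{\mathcal{H}}$), which is exactly your cancellation $P_{\mathcal{H}}R_j(I-P_{\mathcal{H}})w(R)h=0$ checked on spanning vectors, before concluding $R_j\mathcal{M}\subseteq\mathcal{N}\cap\mathcal{H}^{\perp}=\mathcal{M}$. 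The only differences are presentational: you write out the block-triangular computation for the forward direction (which the paper dismisses as ``easy''), and, like the paper, your key step applies the dilation hypothesis to the word $R_jR_1^{i_1}\cdots R_n^{i_n}$, so both arguments rest on the same word-form reading of ``dilation.''
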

\begin{proof}
Assume that $(R_1,R_2,\dots,R_n)$ is a dilation of $(S_1,S_2,\dots,S_n)$. Therefore we have
$$
S_1^{i_1}S_2^{i_2}\cdots S_n^{i_n}=P_{\mathcal{H}}R_1^{i_1}R_2^{i_2}\cdots R_n^{i_n}|_{\mathcal{H}}, \text{ for all $i_1,i_2,\dots,i_n \in \mathbb{N}$}.
$$
Let $\mathcal{N}$ be the following subspace $\mathcal{N}=\bigvee_{i_1,i_2,\dots i_n=0}^{\infty}R_1^{i_1}R_2^{i_2}\cdots R_n^{i_n}H$.
Since $R_i$s commute with each other $\mathcal{N}$ is invariant for each $R_i$. To complete the proof it is sufficient to show that $\mathcal{M}=\mathcal{N} \ominus \mathcal{H}$ is invariant for each $R_i$.
Note that
$$
P_{\mathcal{H}}R_{j}R_1^{i_1}R_2^{i_2}\cdots R_n^{i_n}h=S_jS_1^{i_1}S_2^{i_2}\cdots S_n^{i_n}h=S_jP_{\mathcal{H}}R_1^{i_1}R_2^{i_2}\cdots R_n^{i_n}h,
$$
for all $i_1,i_2,\dots,i_n \in \mathbb{N}$, $j=1,2,\dots,n$ and $h \in \mathbb{H}$. Hence $P_{\mathcal{H}}R_j|_{\mathcal{N}}=S_jP_{\mathcal{H}}|_{\mathcal{N}}$ or equivalently
$S_j^*=P_{\mathcal{N}}R_j^*|_{\mathcal{H}}$.
Using this along with the fact that $\mathcal{M} \subseteq \mathcal{N}$ is orthogonal to $\mathcal{H}$ we have
$$
\langle  R_jm,h \rangle =   \langle m,R_j^*h \rangle = \langle  m,P_{\mathcal{N}}R_j^*h \rangle = \langle  m,S_j^*h \rangle = 0,
$$
for all $m \in \mathcal{M}$, $h \in \mathcal{H}$ and $j=1,2,\dots, n$.
Therefore $R_j\mathcal{M}$ is orthogonal to $\mathcal{H}$, for all $j=1,2,\dots,n$. Hence $\mathcal{M}$ is invariant for each $R_j$. The converse is easy. This completes the proof.
\end{proof}

\section{Examples of fundamental operators}

\subsection{Hardy space of the bidisk} Consider the Hilbert space
$$
H^2(\mathbb{D}^2)=\{ f: \mathbb{D}^2 \to \mathbb{C}: f(z_1,z_2)=\sum_{i=0}^\infty\sum_{j=0}^\infty a_{ij}z_1^iz_2^j \text{ with } \sum_{i=0}^\infty\sum_{j=0}^\infty |a_{ij}|^2 < \infty \}
$$
with the inner product $\langle \sum_{i,j=0}^\infty a_{ij}z_1^iz_2^j, \sum_{i,j=0}^\infty b_{ij}z_1^iz_2^j \rangle = \sum_{i,j=0}^\infty a_{ij}\bar{b_{ij}} $. Note that the operator pair $(M_{z_1+z_2},M_{z_1z_2})$ on $H^2(\mathbb{D}^2)$ is a $\Gamma$-isometry, since it is the restriction of the $\Gamma$-unitary $(M_{z_1+z_2},M_{z_1z_2})$ on $L^2(\mathbb{T}^2)$, where $\mathbb{T}$ denotes the unit circle. For brevity, we call the pair $(M_{z_1+z_2},M_{z_1z_2})$ on $H^2(\mathbb{D}^2)$ by $(S,P)$. In this section, we shall first find the fundamental operator of $(S^*,P^*)$

  Note that every element $f\in H^2(\mathbb{D}^2)$ has the form $f(z_1,z_2)=\sum_{i=0}^\infty\sum_{j=0}^\infty a_{ij}z_1^iz_2^j$ where $a_{ij}\in \mathbb{C},$ for all $i,j \geq 0$. So we can write $f$ in the matrix form
  $$ \left( \left( a_{ij} \right) \right)_{i,j=0}^\infty =
\begin{pmatrix}
a_{00} & a_{01} & a_{02} & \dots \\
a_{10} & a_{11} & a_{12} &\dots  \\
a_{20} & a_{21} & a_{22} & \dots \\
\vdots & \vdots & \vdots & \ddots
\end{pmatrix},
$$
where $(ij)$-th entry in the matrix, denotes the coefficient of $z_1^iz_2^j$ in $f(z_1,z_2)=\sum_{i=0}^\infty\sum_{j=0}^\infty a_{ij}z_1^iz_2^j.$
We shall write the matrix form instead of writing the series. In this notation,
 \begin{equation} S( \; \left( \left( a_{ij} \right) \right)_{i,j=0}^\infty \; ) = \left( a_{(i-1)j} + a_{i(j-1)} \right) \mbox{ and } P( \; \left( \left( a_{ij} \right) \right)_{i,j=0}^\infty \; ) = \left( a_{(i-1)(j-1)} \right) \label{SandP} \end{equation}
with the convention that $a_{ij}$ is zero if either $i$ or $j$ is negative.

\begin{lemma}\label{S^* and P^*} The adjoints of the operators $S$
and $P$ are as follows.
$$S^*\begin{pmatrix}
a_{00} & a_{01} & a_{02} & \dots \\
a_{10} & a_{11} & a_{12} &\dots  \\
a_{20} & a_{21} & a_{22} & \dots \\
\vdots & \vdots & \vdots & \ddots
\end{pmatrix}
=
\begin{pmatrix}
a_{10}+a_{01} & a_{11}+a_{02} & a_{12}+a_{03} & \dots \\
a_{20}+a_{11} & a_{21}+a_{12} & a_{22}+a_{13} &\dots  \\
a_{30}+a_{21} & a_{31}+a_{22} & a_{32}+a_{23} & \dots \\
\vdots & \vdots & \vdots & \ddots
\end{pmatrix}
$$
and
$$P^*\begin{pmatrix}
a_{00} & a_{01} & a_{02} & \dots \\
a_{10} & a_{11} & a_{12} &\dots  \\
a_{20} & a_{21} & a_{22} & \dots \\
\vdots & \vdots & \vdots & \ddots
\end{pmatrix}
=
\begin{pmatrix}
a_{11} & a_{12} & a_{13} & \dots \\
a_{21} & a_{22} & a_{23} &\dots  \\
a_{31} & a_{32} & a_{33} & \dots \\
\vdots & \vdots & \vdots & \ddots
\end{pmatrix}.
$$
\end{lemma}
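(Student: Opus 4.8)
The plan is to exploit that the monomials $\{z_1^i z_2^j : i,j \geq 0\}$ form an orthonormal basis of $H^2(\mathbb{D}^2)$ with respect to the given inner product, so that computing an adjoint reduces to reading off a single Fourier coefficient. Since $S = M_{z_1+z_2} = M_{z_1} + M_{z_2}$ and $P = M_{z_1 z_2} = M_{z_1}M_{z_2}$, specializing \eqref{SandP} to a single monomial records the action on basis vectors:
$$ S(z_1^i z_2^j) = z_1^{i+1} z_2^j + z_1^i z_2^{j+1} \quad\text{and}\quad P(z_1^i z_2^j) = z_1^{i+1} z_2^{j+1}. $$

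First I would compute, for an arbitrary $f = \sum_{m,n} a_{mn} z_1^m z_2^n$, the coefficient of $z_1^i z_2^j$ in $S^* f$. By orthonormality this coefficient equals $\langle S^* f, z_1^i z_2^j \rangle = \langle f, S(z_1^i z_2^j)\rangle$, and substituting the formula above together with the identity $\langle f, z_1^p z_2^q\rangle = a_{pq}$ yields $a_{(i+1)j} + a_{i(j+1)}$. Arranging these entries over $i,j \geq 0$ reproduces exactly the displayed matrix for $S^*$; for instance the $(0,0)$ entry is $a_{10}+a_{01}$ and the $(1,0)$ entry is $a_{20}+a_{11}$, matching the claim. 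Next I would treat $P^*$ identically: the coefficient of $z_1^i z_2^j$ in $P^* f$ is $\langle f, P(z_1^i z_2^j)\rangle = \langle f, z_1^{i+1}z_2^{j+1}\rangle = a_{(i+1)(j+1)}$, which is precisely the stated shift of the coefficient matrix one step up and one step to the left.

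The computation presents no real obstacle; the entire content is index bookkeeping together with the single observation that the monomials are orthonormal. The only point demanding care is the convention that $a_{mn}$ vanishes when an index is negative, and this never actually intervenes here, since both adjoints only raise indices, so no truncation occurs and every entry of the output matrices is a genuine coefficient of $f$. An equivalent route, should one prefer to bypass the inner product entirely, is to verify directly on pairs of monomials that the proposed operators satisfy $\langle S g, f\rangle = \langle g, S^* f\rangle$ and $\langle P g, f\rangle = \langle g, P^* f\rangle$ using \eqref{SandP}; this too collapses to matching indices, so either presentation is routine.
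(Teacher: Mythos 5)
Your proof is correct and follows essentially the same route as the paper, which disposes of this lemma with the remark that it is ``a matter of straightforward inner product computation.'' You have simply supplied the details of that computation: using orthonormality of the monomials $z_1^i z_2^j$ to read off each coefficient of $S^*f$ and $P^*f$ via $\langle f, S(z_1^i z_2^j)\rangle$ and $\langle f, P(z_1^i z_2^j)\rangle$, exactly as intended.
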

\begin{proof}
This is a matter of straightforward inner product computation.
\end{proof}
\begin{lemma}\label{defect space}
The defect space of $P^*$ in the matrix form is
$$\mathcal{D}_{P^*}
=
\{ \begin{pmatrix}
a_{00} & a_{01} & a_{02} & \dots \\
a_{10} & 0 & 0 &\dots  \\
a_{20} & 0 & 0 & \dots \\
\vdots & \vdots & \vdots & \ddots
\end{pmatrix}:  |a_{00}|^2 + \sum_{j=1}^\infty|a_{0j}|^2 + \sum_{j=1}^\infty|a_{j0}|^2 <
\infty\}.$$ The defect space in the function form is
$\overline{span}\{ 1, z_1^i,z_2^j: i,j \geq 1 \}$. The defect
operator for $P^*$ is $$D_{P^*}
\begin{pmatrix}
a_{00} & a_{01} & a_{02} & \dots \\
a_{10} & a_{11} & a_{12} &\dots  \\
a_{20} & a_{21} & a_{22} & \dots \\
\vdots & \vdots & \vdots & \ddots
\end{pmatrix}
=
\begin{pmatrix}
a_{00} & a_{01} & a_{02} & \dots \\
a_{10} & 0 & 0 &\dots  \\
a_{20} & 0 & 0 & \dots \\
\vdots & \vdots & \vdots & \ddots
\end{pmatrix}.
$$
\end{lemma}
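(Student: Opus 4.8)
The plan is to exploit the fact that $P=M_{z_1z_2}$ is an isometry, so that $PP^*$ is an orthogonal projection and the defect operator $D_{P^*}=(I-PP^*)^{1/2}$ coincides with the projection $I-PP^*$ itself; the whole computation then reduces to identifying the range of this one projection.

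First I would compute $PP^*$ in the matrix picture. Using the formula for $P^*$ from Lemma~\ref{S^* and P^*}, applying $P^*$ to $((a_{ij}))$ returns the matrix whose $(i,j)$ entry is $a_{(i+1)(j+1)}$, i.e.\ it deletes the first row and first column; applying $P$ afterwards, as in \eqref{SandP}, shifts both indices up by one and reinserts a zero first row and first column. Hence $PP^*((a_{ij}))$ agrees with $((a_{ij}))$ in every entry with $i\ge 1$ and $j\ge 1$ and vanishes whenever $i=0$ or $j=0$. In function language this says $PP^*$ is the orthogonal projection onto $\overline{span}\{z_1^iz_2^j : i,j\ge 1\}=Ran(P)$, exactly as one expects since multiplication by $z_1z_2$ is an isometry and hence $PP^*$ is the range projection of $P$.

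Consequently $I-PP^*$ is the complementary orthogonal projection, onto $\overline{span}\{z_1^iz_2^j : i=0 \text{ or } j=0\}=\overline{span}\{1, z_1^i, z_2^j : i,j\ge 1\}$; in matrix form it keeps only the first row and first column and zeroes out the rest. Because $I-PP^*$ is a (self-adjoint) idempotent, its unique positive square root is itself, so $D_{P^*}=(I-PP^*)^{1/2}=I-PP^*$, which is precisely the displayed formula. The defect space $\mathcal{D}_{P^*}=\overline{Ran}\,D_{P^*}$ is then just the range of this projection, yielding both the stated matrix description (the $\ell^2$ summability condition on $a_{00}$, the $a_{0j}$ and the $a_{j0}$ being nothing but membership in $H^2(\mathbb{D}^2)$) and the function form $\overline{span}\{1,z_1^i,z_2^j : i,j\ge 1\}$.

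There is no serious obstacle here: the only point requiring care is the routine verification that $PP^*$ acts as the claimed coordinate-deleting map, after which the idempotence of $I-PP^*$ makes the square root trivial and identifies $\mathcal{D}_{P^*}$ at once.
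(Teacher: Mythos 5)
Your proof is correct and follows essentially the same route as the paper: both arguments rest on the observation that $P$ is an isometry, so $D_{P^*}=(I-PP^*)^{1/2}=I-PP^*$ is the orthogonal projection onto $Range(P)^{\perp}$, whose range is then read off from the coefficient formula for $P$. Your write-up merely spells out the matrix computation of $PP^*$ that the paper leaves as "the rest follows."
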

\begin{proof}
Since $P$ is an isometry, $D_{P^*}$ is a projection onto $Range(P)^{\perp} = H^2(\mathbb{D}^2) \ominus Range(P)$. The rest follows from the formula for $P$ in (\ref{SandP}).
\end{proof}
\begin{definition}\label{fs fund}
Define $B: \mathcal{D}_{P^*} \to \mathcal{D}_{P^*}$ by
\begin{eqnarray}\label{full space}
B\begin{pmatrix}
a_{00} & a_{01} & a_{02} & \dots \\
a_{10} & 0 & 0 &\dots  \\
a_{20} & 0 & 0 & \dots \\
\vdots & \vdots & \vdots & \ddots
\end{pmatrix}
=
\begin{pmatrix}
a_{10}+a_{01} & a_{02} & a_{03} & \dots \\
a_{20} & 0 & 0 &\dots  \\
a_{30} & 0 & 0 & \dots \\
\vdots & \vdots & \vdots & \ddots
\end{pmatrix}
\end{eqnarray} for all $a_{j0}, a_{0j} \in \mathbb{C}, j=0,1,2,\dots \text{ with } |a_{00}|^2 + \sum_{j=1}^\infty|a_{0j}|^2 + \sum_{j=1}^\infty|a_{j0}|^2 < \infty$.
\end{definition}
\begin{lemma}
The operator $B$ as defined in Definition \ref{fs fund} is the fundamental operator of $(S^*,P^*)$.
\end{lemma}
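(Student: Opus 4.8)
The plan is to recognise that, by the definition of the fundamental operator together with equation (\ref{Maa11}), the operator $B$ is the fundamental operator of $(S^*,P^*)$ precisely when $B\in\mathcal{B}(\mathcal{D}_{P^*})$ satisfies the fundamental equation $S^*-SP^*=D_{P^*}BD_{P^*}$. I would first observe that this identity has at most one bounded solution on $\mathcal{D}_{P^*}$: since $\mathcal{D}_{P^*}=\overline{Ran}\,D_{P^*}$, the vectors $D_{P^*}h$ (for $h\in\mathcal{H}$) are dense in $\mathcal{D}_{P^*}$, and the relation $\langle D_{P^*}XD_{P^*}h,h'\rangle=\langle XD_{P^*}h,D_{P^*}h'\rangle$ pins any bounded $X$ down on this dense set. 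As the genuine fundamental operator $G$ of $(S^*,P^*)$ is one such solution, it will suffice to check that $B$—which is visibly bounded, since $|a_{10}+a_{01}|^2\le 2(|a_{10}|^2+|a_{01}|^2)$—verifies the same equation; the numerical radius bound $w(B)\le1$ is then automatic from this uniqueness (though it can also be checked by hand).

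The core of the argument is a single matrix computation, which I would carry out on an arbitrary $f=((a_{ij}))$. Using Lemma \ref{defect space}, $D_{P^*}f$ is the matrix retaining only the zeroth row and zeroth column of $f$. Applying $B$ as in Definition \ref{fs fund} produces a matrix whose only nonzero entries are $a_{10}+a_{01}$ in position $(0,0)$, the values $a_{0(j+1)}$ along the rest of the zeroth row, and $a_{(i+1)0}$ down the rest of the zeroth column. This output already lies in $\mathcal{D}_{P^*}$, so the outer $D_{P^*}$ acts as the identity and $D_{P^*}BD_{P^*}f$ equals exactly this matrix.

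I would then compute $(S^*-SP^*)f$ entrywise and match. By Lemma \ref{S^* and P^*}, $(S^*f)_{ij}=a_{(i+1)j}+a_{i(j+1)}$, while from (\ref{SandP}), writing $g=P^*f$ with $g_{ij}=a_{(i+1)(j+1)}$, one gets $(Sg)_{ij}=g_{(i-1)j}+g_{i(j-1)}$, i.e. $a_{i(j+1)}$ is present only when $i\ge1$ and $a_{(i+1)j}$ only when $j\ge1$, under the convention that $a_{ij}=0$ for a negative index. Subtracting, the interior entries ($i,j\ge1$) cancel completely, the $(0,0)$ entry is $a_{10}+a_{01}$, the remaining zeroth-row entries are $a_{0(j+1)}$, and the remaining zeroth-column entries are $a_{(i+1)0}$—precisely the matrix produced by $D_{P^*}BD_{P^*}$. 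Hence the fundamental equation holds and $B=G$.

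The step requiring the most care is the index book-keeping in $SP^*$: the boundary (zeroth row and column) behaves differently from the interior because of the negative-index convention, and it is exactly this asymmetry that makes the interior entries cancel while leaving the boundary data that $B$ was designed to reproduce. Everything else is routine once the actions of $S^*$, $P^*$ and $D_{P^*}$ are read off from the preceding lemmas.
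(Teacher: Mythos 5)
Your proposal is correct and takes essentially the same route as the paper: both proofs establish that $B$ is the fundamental operator by directly verifying the fundamental equation $S^*-SP^*=D_{P^*}BD_{P^*}$ on a general element, reading off the actions of $S^*$, $P^*$, $D_{P^*}$ and $B$ from the preceding lemmas and matching the resulting matrices entry by entry. Your additional remarks---that the fundamental equation has at most one bounded solution on $\mathcal{D}_{P^*}$ (so that $B=G$ and $w(B)\le 1$ follow automatically) and that $B$ is visibly bounded---are correct and merely make explicit what the paper leaves implicit.
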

\begin{proof}
To show that $B$ is the fundamental operator of $(S^*,P^*)$, we shall show that $B$ satisfies the fundamental equation $S^*-SP^*=D_{P^*}BD_{P^*}$. Using Lemma \ref{S^* and P^*}, we get
\begin{eqnarray*}
(S^*-SP^*)
\begin{pmatrix}
a_{00} & a_{01} & a_{02} & \dots \\
a_{10} & a_{11} & a_{12} &\dots  \\
a_{20} & a_{21} & a_{22} & \dots \\
\vdots & \vdots & \vdots & \ddots
\end{pmatrix}
=
S^*
\begin{pmatrix}
a_{00} & a_{01} & a_{02} & \dots \\
a_{10} & a_{11} & a_{12} &\dots  \\
a_{20} & a_{21} & a_{22} & \dots \\
\vdots & \vdots & \vdots & \ddots
\end{pmatrix}
-
S
\begin{pmatrix}
a_{11} & a_{12} & a_{13} & \dots \\
a_{21} & a_{22} & a_{23} &\dots  \\
a_{31} & a_{32} & a_{33} & \dots \\
\vdots & \vdots & \vdots & \ddots
\end{pmatrix}
\end{eqnarray*}
\begin{eqnarray*}
&=&
\begin{pmatrix}
a_{10}+a_{01} & a_{11}+a_{02} & a_{12}+a_{03} & \dots \\
a_{20}+a_{11} & a_{21}+a_{12} & a_{22}+a_{13} &\dots  \\
a_{30}+a_{21} & a_{31}+a_{22} & a_{32}+a_{23} & \dots \\
\vdots & \vdots & \vdots & \ddots
\end{pmatrix}
-
\begin{pmatrix}
    0   & a_{11}  & a_{12} & a_{13} &\dots \\
a_{11} & a_{21}+a_{12} & a_{22}+a_{13} &a_{23}+a_{14} &\dots  \\
a_{21} & a_{31}+a_{22} & a_{32}+a_{23} & a_{33}+a_{24} & \dots \\
a_{31} & a_{41}+a_{32} &a_{42}+a_{33} &a_{43}+a_{34}& \cdots \\
\vdots & \vdots & \vdots & \vdots & \ddots
\end{pmatrix}
\\
&=&
\begin{pmatrix}
a_{10}+a_{01} & a_{02} & a_{03} & \dots \\
a_{20} & 0 & 0 &\dots  \\
a_{30} & 0 & 0 & \dots \\
\vdots & \vdots & \vdots & \ddots
\end{pmatrix}.
\end{eqnarray*}
Using Lemma \ref{defect space} and Definition \ref{fs fund}, we get
\begin{eqnarray*}
&&D_{P^*}BD_{P^*}
\begin{pmatrix}
a_{00} & a_{01} & a_{02} & \dots \\
a_{10} & a_{11} & a_{12} &\dots  \\
a_{20} & a_{21} & a_{22} & \dots \\
\vdots & \vdots & \vdots & \ddots
\end{pmatrix}
=
D_{P^*}B
\begin{pmatrix}
a_{00} & a_{01} & a_{02} & \dots \\
a_{10} & 0 & 0 &\dots  \\
a_{20} & 0 & 0 & \dots \\
\vdots & \vdots & \vdots & \ddots
\end{pmatrix}\\
&=&
D_{P^*}
\begin{pmatrix}
a_{10}+a_{01} & a_{02} & a_{03} & \dots \\
a_{20} & 0 & 0 &\dots  \\
a_{30} & 0 & 0 & \dots \\
\vdots & \vdots & \vdots & \ddots
\end{pmatrix}
=
\begin{pmatrix}
a_{10}+a_{01} & a_{02} & a_{03} & \dots \\
a_{20} & 0 & 0 &\dots  \\
a_{30} & 0 & 0 & \dots \\
\vdots & \vdots & \vdots & \ddots
\end{pmatrix}.
\end{eqnarray*}
Hence the proof.
\end{proof}

Now we shall consider two subspaces of the Hilbert space $H^2(\mathbb{D}^2)$. The first one consists of all symmetric functions in $H^2(\mathbb{D}^2)$, i.e.,
$$H_+ = \{f \in H^2(\mathbb{D}^2): f(z_1,z_2)=f(z_2,z_1)\}$$ and the second one consists of all anti-symmetric functions in $H^2(\mathbb{D}^2)$, i.e.,
$$H_- = \{f \in H^2(\mathbb{D}^2): f(z_1,z_2)=-f(z_2,z_1)\}.$$
It can be checked that $H^2(\mathbb{D}^2)=H_+ \oplus H_-$. Since both $H_+$ and $H_-$ are invariant under the pair $(M_{z_1+z_2},M_{z_1z_2})$, the spaces $H_+$ and $H_-$ are reducing for $(M_{z_1+z_2},M_{z_1z_2})$. It can be easily checked from the definition of a $\Gamma$-contraction that a restriction of a $\Gamma$-contraction to an invariant subspace is again a $\Gamma$-contraction. So $(M_{z_1+z_2},M_{z_1z_2})|_{H_+}$ and $(M_{z_1+z_2},M_{z_1z_2})|_{H_-}$ are $\Gamma$-contractions. Since restriction of an isometry to an invariant subspace is again an isometry, $M_{z_1z_2}|_{H_+}$ and $M_{z_1z_2}|_{H_-}$ are isometries. Hence by part$(2)$ of Theorem 2.14 in \cite{Sourav da's 1st}, the pairs $(M_{z_1+z_2},M_{z_1z_2})|_{H_+}$ and $(M_{z_1+z_2},M_{z_1z_2})|_{H_-}$ are $\Gamma$-isometries. For brevity, we shall call the pairs $(M_{z_1+z_2},M_{z_1z_2})|_{H_+}$ and $(M_{z_1+z_2},M_{z_1z_2})|_{H_-}$ by $(S_+,P_+)$ and $(S_-,P_-)$ respectively. We shall find the fundamental operators of $(S_+^*,P_+^*)$ and $(S_-^*,P_-^*)$ respectively.

\subsection{Symmetric case}
Every element $f\in H_+$ has the form $f(z_1,z_2)=\sum_{i=0}^\infty\sum_{j=0}^\infty a_{ij}z_1^iz_2^j$ where $a_{ij}\in \mathbb{C}$ and $a_{ij}=a_{ji}$ for all $i,j \geq 0$. So we can write $f$ in the matrix form
  $$
\begin{pmatrix}
a_{00} & a_{01} & a_{02} & \dots \\
a_{01} & a_{11} & a_{12} &\dots  \\
a_{02} & a_{12} & a_{22} & \dots \\
\vdots & \vdots & \vdots & \ddots
\end{pmatrix}.
$$
In what follows, we shall exhibit the fundamental operator of the $\Gamma$-isometry $(S_+, P_+)$. The results are collected and stated in two lemmas without proof because the proofs are similar to what we did above.
\begin{lemma}\label{S_+^* and P_+^*}
The adjoints of $S_+$ and $P_+$ are as follows.
$$S_+^*
\begin{pmatrix}
a_{00} & a_{01} & a_{02} & \dots \\
a_{01} & a_{11} & a_{12} &\dots  \\
a_{02} & a_{12} & a_{22} & \dots \\
\vdots & \vdots & \vdots & \ddots
\end{pmatrix}
=
\begin{pmatrix}
2a_{01} & a_{11}+a_{02} & a_{12}+a_{03} & \dots \\
a_{11}+a_{02} & 2a_{12} & a_{22}+a_{13} &\dots  \\
a_{12}+a_{03} & a_{22}+a_{13} & 2a_{23} & \dots \\
\vdots & \vdots & \vdots & \ddots
\end{pmatrix}$$
and $$P_+^*
\begin{pmatrix}
a_{00} & a_{01} & a_{02} & \dots \\
a_{01} & a_{11} & a_{12} &\dots  \\
a_{02} & a_{12} & a_{22} & \dots \\
\vdots & \vdots & \vdots & \ddots
\end{pmatrix}
=
\begin{pmatrix}
a_{11} & a_{12} & a_{13} & \dots \\
a_{12} & a_{22} & a_{23} &\dots  \\
a_{13} & a_{23} & a_{33} & \dots \\
\vdots & \vdots & \vdots & \ddots
\end{pmatrix}.
$$

The defect space of $P_+^*$ in the matrix form is
$$\mathcal{D}_{P_+^*}
=
\{ \begin{pmatrix}
a_{00}  & a_{01} & a_{02} & \dots \\
a_{01} & 0 & 0 &\dots  \\
a_{02} & 0 & 0 & \dots \\
\vdots & \vdots & \vdots & \ddots
\end{pmatrix}: a_{0j} \in \mathbb{C}, j\geq0 \text{ with }|a_{00}|^2 + 2\sum_{j=1}^\infty|a_{0j}|^2 <
\infty\}.$$ The defect space in the function form is $
\overline{span}\{ z_1^i+z_2^i: i \geq 0 \}.$ The defect operator
is $D_{P_+^*}
\begin{pmatrix}
a_{00} & a_{01} & a_{02} & \dots \\
a_{01} & a_{11} & a_{12} &\dots  \\
a_{02} & a_{12} & a_{22} & \dots \\
\vdots & \vdots & \vdots & \ddots
\end{pmatrix}
=
 \begin{pmatrix}
a_{00} & a_{01} & a_{02} & \dots \\
a_{01} & 0 & 0 &\dots  \\
a_{02} & 0 & 0 & \dots \\
\vdots & \vdots & \vdots & \ddots
\end{pmatrix}.
$
\end{lemma}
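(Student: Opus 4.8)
The plan is to exploit the fact, recorded in the paragraph preceding the lemma, that $H_+$ is a reducing subspace for both $S = M_{z_1+z_2}$ and $P = M_{z_1z_2}$. Because a reducing subspace is invariant under an operator and its adjoint simultaneously, the adjoints simply restrict: $S_+^* = S^*|_{H_+}$ and $P_+^* = P^*|_{H_+}$. Thus I would read off the action of $S_+^*$ and $P_+^*$ from the full-space formulas of Lemma \ref{S^* and P^*}, and then specialize them to a symmetric matrix $(a_{ij})$ with $a_{ij} = a_{ji}$. The only thing to verify here is internal consistency: one checks directly that $S^* f$ and $P^* f$ are again symmetric when $f$ is (for instance, the $(i,j)$-entry $a_{(i+1)j} + a_{i(j+1)}$ of $S^*f$ coincides with its $(j,i)$-entry once $a_{ij} = a_{ji}$ is used), which re-confirms $S^*$- and $P^*$-invariance of $H_+$, and the diagonal entries collapse to the doubled form, e.g. the $(0,0)$-entry of $S_+^* f$ becomes $a_{10} + a_{01} = 2a_{01}$.

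For the defect data I would first note that $P_+$ is an isometry, being the restriction of the isometry $P = M_{z_1z_2}$ to the invariant subspace $H_+$. Hence, exactly as in Lemma \ref{defect space}, $D_{P_+^*} = (I - P_+P_+^*)^{1/2}$ is the orthogonal projection of $H_+$ onto $(\mathrm{Ran}\,P_+)^\perp = H_+ \ominus z_1z_2 H_+$. The range of $P_+$ consists of the symmetric functions of the form $z_1z_2 f$, that is, precisely those symmetric functions whose coefficient matrix vanishes on the first row and first column. Its orthogonal complement inside $H_+$ is therefore exactly the set of symmetric functions supported on the first row and column, which is the matrix form asserted in the lemma, and $D_{P_+^*}$ is the projection that annihilates every entry $a_{ij}$ with $i,j \geq 1$.

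The one genuinely nontrivial bookkeeping point, and the step I expect to demand the most care, is the norm condition with its factor of $2$. Since the inner product on $H_+$ is the one inherited from $H^2(\mathbb{D}^2)$, the squared norm of such a defect vector is $|a_{00}|^2 + \sum_{j\geq 1}(|a_{0j}|^2 + |a_{j0}|^2)$, and the symmetry relation $a_{0j} = a_{j0}$ turns this into $|a_{00}|^2 + 2\sum_{j\geq 1}|a_{0j}|^2$, which is the stated summability condition. Finally, to pass to the function form I would symmetrize the monomial basis: the vector with $a_{0i} = a_{i0} = 1$ (all other entries zero) represents the function $z_1^i + z_2^i$, and letting $i$ range over $0,1,2,\dots$ (with $i=0$ giving the constants) shows that the defect space equals $\overline{\mathrm{span}}\{z_1^i + z_2^i : i \geq 0\}$. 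All of this is routine once the reducing-subspace reduction is in place; beyond careful index- and coefficient-tracking there is no real obstacle.
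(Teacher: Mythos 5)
Your proof is correct, and it is complete. The paper itself omits the proof of this lemma, saying only that the proofs are ``similar to what we did above'' --- i.e.\ one is meant to repeat, inside $H_+$, the direct inner-product computation of Lemma \ref{S^* and P^*} and the isometry-projection argument of Lemma \ref{defect space}. Your treatment of the defect data is exactly that intended argument ($P_+$ is an isometry, so $D_{P_+^*}=I-P_+P_+^*$ is the orthogonal projection onto $H_+\ominus z_1z_2H_+$, whose elements are the symmetric functions supported on the first row and column). For the adjoint formulas, however, you take a genuinely cleaner route: instead of recomputing adjoints within $H_+$, you invoke the fact, already recorded in the paper, that $H_+$ reduces $S$ and $P$, so that $S_+^*=S^*|_{H_+}$ and $P_+^*=P^*|_{H_+}$, and then specialize the full-space formulas of Lemma \ref{S^* and P^*} to symmetric coefficient matrices. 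This buys two things. First, it avoids working with the symmetrized monomial basis of $H_+$, whose elements have non-uniform norms ($\lVert z_1^iz_2^j+z_1^jz_2^i\rVert^2=2$ for $i\neq j$ but $\lVert z_1^iz_2^i\rVert^2=1$), which is the one place a direct adjoint computation inside $H_+$ could easily go astray. Second, it makes the factors of $2$ --- both in the diagonal entries such as $2a_{01}$ of $S_+^*f$ and in the summability condition $|a_{00}|^2+2\sum_{j\geq 1}|a_{0j}|^2<\infty$ --- fall out mechanically from the symmetry relation $a_{ij}=a_{ji}$ rather than from a separate normalization argument. Your verification that $S^*$ and $P^*$ map symmetric matrices to symmetric matrices closes the only logical gap in this reduction (it re-confirms that the restrictions are well defined), so nothing is missing.
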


\begin{definition}\label{symm fund}
Define $B_+: \mathcal{D}_{P_+^*} \to \mathcal{D}_{P_+^*}$ by
\begin{eqnarray}\label{full space}
B_+\begin{pmatrix}
a_{00} & a_{01} & a_{02} & \dots \\
a_{01} & 0 & 0 &\dots  \\
a_{02} & 0 & 0 & \dots \\
\vdots & \vdots & \vdots & \ddots
\end{pmatrix}
=
\begin{pmatrix}
2a_{01} & a_{02} & a_{03} & \dots \\
a_{02} & 0 & 0 &\dots  \\
a_{03} & 0 & 0 & \dots \\
\vdots & \vdots & \vdots & \ddots
\end{pmatrix}
\end{eqnarray} for all $a_{0j} \in \mathbb{C}, j \geq 0 \text{ with } |a_{00}|^2 + 2\sum_{j=1}^\infty|a_{0j}|^2 < \infty$.
\end{definition}
\begin{lemma} The operator $B_+$ defined on $\mathcal{D}_{P_+^*}$ is the fundamental operator of $(S_+^*,P_+^*)$.
\end{lemma}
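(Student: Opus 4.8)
The plan is to mimic exactly the verification that was carried out for the full-space operator $B$. By the existence-and-uniqueness result of \cite{Sourav da's 1st}, the fundamental operator of $(S_+^*,P_+^*)$ is the unique operator in $\mathcal{B}(\mathcal{D}_{P_+^*})$ satisfying the fundamental equation
$$
S_+^* - S_+ P_+^* = D_{P_+^*} B_+ D_{P_+^*},
$$
the analogue of \eqref{Maa11} for the $\Gamma$-contraction $(S_+,P_+)$. Hence it suffices to check that the operator $B_+$ of Definition \ref{symm fund} solves this equation. In fact one does not even need to recheck the numerical-radius bound separately: the right-hand side determines the operator on $\mathcal{D}_{P_+^*}=\overline{Ran}\,D_{P_+^*}$ uniquely, since $D_{P_+^*}A_1D_{P_+^*}=D_{P_+^*}A_2D_{P_+^*}$ forces $\langle A_1 D_{P_+^*}h,D_{P_+^*}h'\rangle=\langle A_2 D_{P_+^*}h,D_{P_+^*}h'\rangle$ for all $h,h'$, and thus $A_1=A_2$ on $\mathcal{D}_{P_+^*}$.

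First I would compute the left-hand side on a symmetric matrix $\left(\left(a_{ij}\right)\right)$ with $a_{ij}=a_{ji}$. Reading off Lemma \ref{S_+^* and P_+^*}, the $(i,j)$ entry of $S_+^*$ is $a_{(i+1)j}+a_{i(j+1)}$, while the $(i,j)$ entry of $S_+P_+^*$ is $a_{i(j+1)}+a_{(i+1)j}$ for all $i,j\ge 1$. These two expressions are termwise identical, so the interior ($i,j\ge 1$) of $S_+^*-S_+P_+^*$ vanishes. On the first row and first column the boundary contributions survive: the $(0,0)$ entry is $2a_{01}$, and for $j\ge 1$ the $(0,j)$ and (by symmetry) $(j,0)$ entries equal $a_{0(j+1)}$. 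Thus $S_+^*-S_+P_+^*$ is supported on the first row and column, with exactly those entries.

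Next I would compute the right-hand side. By Lemma \ref{S_+^* and P_+^*}, $D_{P_+^*}$ is the projection retaining only the first row and first column, so $D_{P_+^*}\left(\left(a_{ij}\right)\right)$ has first row/column $(a_{00},a_{01},a_{02},\dots)$ and zero interior. Applying $B_+$ from Definition \ref{symm fund} sends this to the matrix with first row $(2a_{01},a_{02},a_{03},\dots)$, symmetric first column, and vanishing interior; the final $D_{P_+^*}$ leaves it unchanged, the vector already lying in $\mathcal{D}_{P_+^*}$. Comparing entrywise, $D_{P_+^*}B_+D_{P_+^*}\left(\left(a_{ij}\right)\right)$ coincides with $S_+^*-S_+P_+^*$, which verifies the fundamental equation and, by the uniqueness noted above, identifies $B_+$ as the fundamental operator of $(S_+^*,P_+^*)$.

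The only real content is the bookkeeping that kills the interior entries, and this rests squarely on the symmetry relation $a_{ij}=a_{ji}$ — precisely the feature distinguishing $(S_+,P_+)$ from the full-space pair, which collapses the two diagonal-adjacent contributions of $S_+^*$ and of $S_+P_+^*$ into the same value. I therefore expect no genuine obstacle: the argument is entirely parallel to the already-displayed computation showing that $B$ is the fundamental operator of $(S^*,P^*)$, which is why the authors elect to omit it.
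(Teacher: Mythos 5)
Your verification is correct and is exactly the argument the paper intends: the lemma is stated without proof precisely because the proof is the same entrywise computation already displayed for the full-space operator $B$, namely checking $S_+^*-S_+P_+^*=D_{P_+^*}B_+D_{P_+^*}$ and invoking uniqueness of the solution of the fundamental equation on $\mathcal{D}_{P_+^*}$. One quibble with your closing remark: the interior cancellation does not ``rest squarely on'' the symmetry $a_{ij}=a_{ji}$ --- as your own formulas show, the $(i,j)$ entries of $S_+^*$ and $S_+P_+^*$ agree termwise for $i,j\geq 1$ irrespective of symmetry, exactly as in the full-space case; symmetry is only used to collapse the first-row and first-column entries (e.g.\ $a_{10}+a_{01}=2a_{01}$ and $a_{(j+1)0}=a_{0(j+1)}$) so that the answer matches the stated form of $B_+$.
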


\subsection{Anti-symmetric case}
Every element $f\in H_-$ has the form $f(z_1,z_2)=\sum_{i=0}^\infty\sum_{j=0}^\infty a_{ij}z_1^iz_2^j$ where $a_{ij}\in \mathbb{C}$ and $a_{ij}=-a_{ji}$ for all $i,j \geq 0$. So we can write $f$ in the matrix form
  $$
\begin{pmatrix}
0 & a_{01} & a_{02} & \dots \\
-a_{01} & 0 & a_{12} &\dots  \\
-a_{02} & -a_{12} & 0 & \dots \\
\vdots & \vdots & \vdots & \ddots
\end{pmatrix}.
$$

\begin{lemma}\label{S_-^* and P_-^*} The adjoints of $S_-$ and $P_-$ are
as follows. $$S_-^*
\begin{pmatrix}
0 & a_{01} & a_{02} & \dots \\
-a_{01} & 0 & a_{12} &\dots  \\
-a_{02} & -a_{12} & 0 & \dots \\
\vdots & \vdots & \vdots & \ddots
\end{pmatrix}
=
\begin{pmatrix}
0  & a_{02} & a_{12}+a_{03} & \dots \\
-a_{02} & 0 & a_{13} &\dots  \\
-a_{12}-a_{03} & -a_{13} & 0 & \dots \\
\vdots & \vdots & \vdots & \ddots
\end{pmatrix}$$ and
$$
P_-^*
\begin{pmatrix}
0 & a_{01} & a_{02} & \dots \\
-a_{01} & 0 & a_{12} &\dots  \\
-a_{02} & -a_{12} & 0 & \dots \\
\vdots & \vdots & \vdots & \ddots
\end{pmatrix}
=
\begin{pmatrix}
0 & a_{12} & a_{13} & \dots \\
-a_{12} & 0 & a_{23} &\dots  \\
-a_{13} & -a_{23} & 0 & \dots \\
\vdots & \vdots & \vdots & \ddots
\end{pmatrix}.
$$
 The defect space of $P_-^*$ in
the matrix form is $$\mathcal{D}_{P_-^*} = \{ \begin{pmatrix}
0  & a_{01} & a_{02} & \dots \\
-a_{01} & 0 & 0 &\dots  \\
-a_{02} & 0 & 0 & \dots \\
\vdots & \vdots & \vdots & \ddots
\end{pmatrix}: a_{0j} \in \mathbb{C}, j\geq1 \text{ with } 2\sum_{j=1}^\infty|a_{0j}|^2 <
\infty\}.$$ The defect space in the function form is
$\overline{span}\{ z_1^i-z_2^i: i \geq 1 \}$. The defect operator
is

$$D_{P_-^*}
\begin{pmatrix}
0 & a_{01} & a_{02} & \dots \\
-a_{01} & 0 & a_{12} &\dots  \\
-a_{02} & -a_{12} & 0 & \dots \\
\vdots & \vdots & \vdots & \ddots
\end{pmatrix}
=
 \begin{pmatrix}
0  & a_{01} & a_{02} & \dots \\
-a_{01} & 0 & 0 &\dots  \\
-a_{02} & 0 & 0 & \dots \\
\vdots & \vdots & \vdots & \ddots
\end{pmatrix}.
$$
\end{lemma}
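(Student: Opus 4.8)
The plan is to mirror the full-space computations of Lemma \ref{S^* and P^*} and Lemma \ref{defect space}, exploiting that $H_-$ is a reducing subspace for both $S = M_{z_1+z_2}$ and $P = M_{z_1z_2}$. Because $H_-$ reduces these operators, the adjoint of a restriction is the restriction of the adjoint; hence $S_-^* = S^*|_{H_-}$ and $P_-^* = P^*|_{H_-}$, and I may simply feed an anti-symmetric matrix into the formulas already derived in Lemma \ref{S^* and P^*}.

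First I would substitute the anti-symmetry relations $a_{ij} = -a_{ji}$ (in particular $a_{ii} = 0$) into the expression $S^*((a_{ij})) = (a_{(i+1)j} + a_{i(j+1)})$. For instance, the $(0,0)$ entry becomes $a_{10} + a_{01} = -a_{01} + a_{01} = 0$, the $(1,0)$ entry becomes $a_{20} + a_{11} = -a_{02}$, and so on; collecting the entries reproduces the stated matrix for $S_-^*$ and confirms that the output is again anti-symmetric (as it must be, $H_-$ being reducing). The same substitution in $P^*((a_{ij})) = (a_{(i+1)(j+1)})$ gives the claimed form of $P_-^*$, the diagonal entries $a_{(i+1)(i+1)}$ vanishing automatically.

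For the defect data I would use that $P_-$ is an isometry: it is the restriction of the isometry $P = M_{z_1z_2}$ to the invariant subspace $H_-$. Consequently $D_{P_-^*}$ is the orthogonal projection of $H_-$ onto $Range(P_-)^\perp = H_- \ominus Range(P_-)$. Since $Range(P_-) = z_1 z_2 H_-$, in matrix form its elements are obtained by shifting every entry one step down and one step to the right, so the first row and first column vanish. The orthogonal complement within $H_-$ is therefore exactly the set of anti-symmetric matrices supported on the first row and first column, which yields the stated description of $\mathcal{D}_{P_-^*}$; passing to generating functions gives $\overline{span}\{z_1^i - z_2^i : i \geq 1\}$. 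The defect operator $D_{P_-^*}$ is then the projection that retains the first row and column and annihilates the remaining entries.

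The computations are entirely routine; the only points demanding care are bookkeeping the sign pattern $a_{ij} = -a_{ji}$ consistently and tracking the norm normalization. In particular, because each off-diagonal coefficient $a_{0j}$ ($j \geq 1$) is paired with $a_{j0} = -a_{0j}$, the squared norm of a defect vector is $2\sum_{j\geq 1}|a_{0j}|^2$ rather than $\sum_{j\geq 1}|a_{0j}|^2$, which accounts for the factor of $2$ in the summability condition. No genuine obstacle arises beyond this careful bookkeeping.
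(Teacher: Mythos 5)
Correct, and essentially the paper's own approach: the paper states this lemma without proof, the intended argument being the same routine computations as in the full-space case (Lemmas \ref{S^* and P^*} and \ref{defect space}), which is precisely what you carry out --- restricting the full-space adjoint formulas to the reducing subspace $H_-$ (so that $S_-^*=S^*|_{H_-}$, $P_-^*=P^*|_{H_-}$) and identifying $D_{P_-^*}$ with the orthogonal projection onto $H_-\ominus Range(P_-)$ because $P_-$ is an isometry. Your sign bookkeeping under $a_{ij}=-a_{ji}$ and the factor of $2$ in the summability condition both check out against the stated formulas.
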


\begin{definition}\label{anti-symm fund}
Define $B_-: \mathcal{D}_{P_-^*} \to \mathcal{D}_{P_-^*}$ by
\begin{eqnarray}\label{full space}
B_-
\begin{pmatrix}
0  & a_{01} & a_{02} & \dots \\
-a_{01} & 0 & 0 &\dots  \\
-a_{02} & 0 & 0 & \dots \\
\vdots & \vdots & \vdots & \ddots
\end{pmatrix}
=
\begin{pmatrix}
0 & a_{02} & a_{03} & \dots \\
-a_{02} & 0 & 0 &\dots  \\
-a_{03} & 0 & 0 & \dots \\
\vdots & \vdots & \vdots & \ddots
\end{pmatrix}
\end{eqnarray} for all $a_{0j} \in \mathbb{C}, j \geq 1 \text{ with }  2\sum_{j=1}^\infty|a_{0j}|^2 < \infty$.
\end{definition}
\begin{lemma}
$B_-$ is the fundamental operator of $(S_-^*,P_-^*)$.
\end{lemma}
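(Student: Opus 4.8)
The plan is to invoke the uniqueness of the fundamental operator. Viewing $(S_-,P_-)$ as the base $\Gamma$-isometry, the operator $B_-$ we want is the fundamental operator of the adjoint pair $(S_-^*,P_-^*)$, so by (\ref{Maa11}) it is characterized as the unique bounded operator on $\mathcal{D}_{P_-^*}$ satisfying
$$S_-^*-S_-P_-^*=D_{P_-^*}B_-D_{P_-^*}.$$
Uniqueness of a solution in $\mathcal{B}(\mathcal{D}_{P_-^*})$ is immediate, since sandwiching between $D_{P_-^*}$ determines the middle operator on $\mathcal{D}_{P_-^*}=\overline{\mathrm{Ran}}\,D_{P_-^*}$. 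Because $B_-$ as given in Definition \ref{anti-symm fund} is manifestly a bounded map of $\mathcal{D}_{P_-^*}$ into itself (essentially a backward shift on the entries $(a_{0j})$), it suffices to verify that it solves this equation; the numerical-radius bound then comes for free from the general theory. I would therefore check $(S_-^*-S_-P_-^*)f = D_{P_-^*}B_-D_{P_-^*}f$ for an arbitrary antisymmetric $f=((a_{ij}))$.

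First I would compute the right-hand side, which is the easy side. By Lemma \ref{S_-^* and P_-^*}, $D_{P_-^*}f$ retains only the first row and first column $(a_{0j})$; Definition \ref{anti-symm fund} then shifts these to $(a_{0,j+1})$; and the outer $D_{P_-^*}$ acts as the identity since the result already lies in $\mathcal{D}_{P_-^*}$. Thus $D_{P_-^*}B_-D_{P_-^*}f$ is the antisymmetric matrix with first row $(0,a_{02},a_{03},\dots)$ and all interior entries zero.

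Next I would compute the left-hand side. Here I would read off $S_-^*f$ directly from Lemma \ref{S_-^* and P_-^*}, and obtain $S_-P_-^*f$ by first applying $P_-^*$ (Lemma \ref{S_-^* and P_-^*}) and then $S_-$, where $S_-$ acts by the restriction of the formula in (\ref{SandP}), namely $(S_-g)_{ij}=g_{(i-1)j}+g_{i(j-1)}$ with the usual convention that negative indices give zero. Writing out both matrices entrywise and subtracting, the $(0,j)$ entries collapse to $a_{0,j+1}$ (and symmetrically down the first column), matching the right-hand side exactly.

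The only real content beyond bookkeeping is that all \emph{interior} entries (those in neither the first row nor the first column) of $S_-^*f - S_-P_-^*f$ must cancel, so that the difference genuinely lands in $\mathcal{D}_{P_-^*}$. This cancellation is driven by the antisymmetry relation $a_{ij}=-a_{ji}$ (in particular $a_{ii}=0$): for example the $a_{13}$ appearing in the $(1,2)$ entry of $S_-^*f$ is precisely cancelled by the $a_{13}$ produced in $S_-P_-^*f$, and the diagonal entries vanish because $a_{ii}=0$. Tracking these signs correctly across the two operators is the step where the verification could go wrong, but it runs entirely parallel to the full-space computation already carried out for $B$, so no new idea is required.
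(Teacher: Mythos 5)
Your proof is correct and takes essentially the same approach as the paper: the paper states this lemma without proof precisely because the argument is the same direct verification of the fundamental equation $S_-^*-S_-P_-^*=D_{P_-^*}B_-D_{P_-^*}$ that it carries out in detail for the full-space operator $B$, using the formulas for $S_-^*$, $P_-^*$, $D_{P_-^*}$ from Lemma \ref{S_-^* and P_-^*}, the fact that $D_{P_-^*}$ is a projection, and the antisymmetry cancellations you describe. Your explicit appeal to uniqueness of the solution sandwiched between defect operators is exactly the (implicit) logic by which the paper identifies a solution of the fundamental equation with \emph{the} fundamental operator, so no gap remains.
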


\subsection{Explicit unitary equivalence}
The three spaces $H^2(\mathbb D^2), H_+$ and $H_-$ described above
provide us with examples of $\Gamma$-isometries. The respective
operator pairs $(S, P)$, $(S_+, P_+)$ and $S_- , P_-)$ are pure
$\Gamma$-isometries. Agler and Young in Theorem 3.2 of \cite{j.o.t}
proved that any pure $\Gamma$-isometry is unitarily equivalent
to $(M_{\varphi},M_z)$ on $H^2_{\mathcal E}(\mathbb D)$ for some
Hilbert space $\mathcal E$. Moreover, $\varphi$ is linear. It has
been known later in \cite{Sourav da's 2nd} (Theorem 3.1) that
$\mathcal E$ can be taken to be $\mathcal{D}_{P^*}$ and
$\varphi(z)=B^*+Bz$ where $B\in \mathcal{B}(\mathcal{D}_{P^*})$ is
the fundamental operator of the $\Gamma$-coisometry $(S^*,P^*)$.
In the final theorem of this paper, we explicitly find the unitary
operators that implement the unitary equivalence for the pure
$\Gamma$-isometries $(S,P), (S_+,P_+)$ and $(S_-,P_-)$.

\begin{theorem} The three unitary operators are described in three
parts below.

\begin{enumerate}
\item[(a)] The unitary operator $U: H^2(\mathbb{D}^2) \to
    H^2_{\mathcal{D}_{P^*}}(\mathbb{D})$ that satisfies $U^*M_{B^*+zB}U = S$ and $ U^*M_zU = P$ is $Uf(z)= D_{P^*}(I-zP^*)^{-1}f.$

\item[(b)] The unitary operator $U_+: H_+ \to
    H^2_{\mathcal{D}_{P_+^*}}(\mathbb{D})$ that satisfies
    $$U_+^* M_{B_+^*+zB_+}U_+ = S_+ \mbox{ and } U_+^*M_zU_+ = P_+$$
    is simply the restriction of the $U$ above to $H_+$.

\item[(c)] The unitary operator $U_-: H_- \to
    H^2_{\mathcal{D}_{P_-^*}}(\mathbb{D})$ that satisfies
    $$U_-^*M_{B_-^*+zB_-} U_- = S_- \mbox{ and }
    U_-^*M_zU_- = P_-$$ is the restriction of $U$ to $H_-$.
\end{enumerate}
\end{theorem}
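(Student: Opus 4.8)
The plan is to treat part (a) as the substantive case and then obtain (b) and (c) by restricting the single unitary $U$.

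For part (a) I would first recognize $U$ as the canonical Wold model unitary of the \emph{pure} isometry $P=M_{z_1z_2}$. Writing $Uf=\sum_{n\ge0}(D_{P^*}P^{*n}f)z^n$, the wandering subspace of $P$ is $\ker P^*=\mathcal{D}_{P^*}$, and purity gives $\|P^{*n}f\|\to0$; the identity $\|D_{P^*}P^{*n}f\|^2=\|P^{*n}f\|^2-\|P^{*(n+1)}f\|^2$ then telescopes to $\|Uf\|=\|f\|$, with surjectivity onto $H^2_{\mathcal{D}_{P^*}}(\mathbb{D})$ being the Wold decomposition. This already makes $U$ unitary, and a one-line coefficient shift using $D_{P^*}P=0$ and $P^*P=I$ gives $UP=M_zU$, i.e. $U^*M_zU=P$. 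The only real content left is the intertwining $US=M_{B^*+zB}U$.

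To prove $US=M_{B^*+zB}U$ I would compare power-series coefficients. The coefficient of $z^n$ in $USf$ is $D_{P^*}P^{*n}Sf$, while in $M_{B^*+zB}Uf$ it is $B^*D_{P^*}P^{*n}f+BD_{P^*}P^{*(n-1)}f$ (the last term absent when $n=0$). So the whole equivalence reduces to the operator identities $D_{P^*}S=B^*D_{P^*}$ and, for $n\ge1$, $D_{P^*}P^{*n}S=B^*D_{P^*}P^{*n}+BD_{P^*}P^{*(n-1)}$. Here the $\Gamma$-isometry structure enters decisively: since $P$ is an isometry, the fundamental equation (\ref{Maa9}) collapses to $S=S^*P$, equivalently $P^*S=S^*$. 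Combining $P^*S=S^*$ with the commutation $S^*P^*=P^*S^*$ and Remark \ref{rmk1} for $(S^*,P^*)$ (with $B$ in the role of $G$), namely $D_{P^*}S^*=BD_{P^*}+B^*D_{P^*}P^*$, I obtain for $n\ge1$ that $D_{P^*}P^{*n}S=D_{P^*}S^*P^{*(n-1)}=BD_{P^*}P^{*(n-1)}+B^*D_{P^*}P^{*n}$, exactly the required relation; the case $n=0$ is $D_{P^*}S=D_{P^*}S^*P=B^*D_{P^*}$ after using $D_{P^*}P=0$. This finishes (a).

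For (b) and (c) the strategy is to show that this one $U$ respects the symmetric/antisymmetric splitting. Let $J$ be the flip $Jf(z_1,z_2)=f(z_2,z_1)$, a self-adjoint unitary with $H_+,H_-$ as its $\pm1$ eigenspaces; $J$ commutes with $S$ and $P$, hence with $S^*,P^*,D_{P^*}$, so it restricts to a symmetry of $\mathcal{D}_{P^*}$ whose eigenspaces are precisely $\mathcal{D}_{P_+^*}$ and $\mathcal{D}_{P_-^*}$. Conjugating the fundamental equation $S^*-SP^*=D_{P^*}BD_{P^*}$ by $J$ gives $D_{P^*}(JBJ)D_{P^*}=D_{P^*}BD_{P^*}$ with $JBJ$ again of numerical radius $\le1$, so uniqueness of the fundamental operator forces $JBJ=B$. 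Thus $B$ and $B^*$ reduce along $\mathcal{D}_{P^*}=\mathcal{D}_{P_+^*}\oplus\mathcal{D}_{P_-^*}$, and restricting the fundamental equation to $H_\pm$ identifies $B|_{\mathcal{D}_{P_\pm^*}}$ with $B_\pm$ (again by uniqueness). Since $P^*$ and $D_{P^*}$ preserve $H_\pm$, the formula for $U$ sends $H_\pm$ into $H^2_{\mathcal{D}_{P_\pm^*}}(\mathbb{D})$; as $U$ is unitary and both sides split orthogonally, $U|_{H_\pm}$ is onto, and the multiplier $M_{B^*+zB}$ restricts to $M_{B_\pm^*+zB_\pm}$, so $U_\pm=U|_{H_\pm}$ does the job.

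The main obstacle is the coefficientwise verification of the $S$-intertwining in (a): everything hinges on rewriting $D_{P^*}P^{*n}S$ as the two-term expression, and the indispensable input is the $\Gamma$-isometry identity $S=S^*P$ (equivalently $D_P=0$), which is what lets $P^*S$ be replaced by $S^*$ so that Remark \ref{rmk1} applies after commuting $P^{*(n-1)}$ across $S^*$. For (b) and (c) the only delicate point is the reduction of $B$ along the symmetry $J$, which I handle through uniqueness of the fundamental operator rather than by recomputing $B_\pm$ from scratch.
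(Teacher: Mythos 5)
Your proof is correct, and it takes a genuinely different route from the paper's. On part (a) you and the paper agree on unitarity (the same telescoping identity $\|D_{P^*}P^{*n}f\|^2=\|P^{*n}f\|^2-\|P^{*(n+1)}f\|^2$ plus purity, with surjectivity read off from the Wold decomposition resp.\ the matrix form), but you diverge on the intertwinings: the paper proves $U^*M_zU=P$ and $U^*M_{B^*+zB}U=S$ by explicit coefficient-matrix computations in $H^2(\mathbb{D}^2)$, multiplying out the power series using the concrete formulas of Lemmas \ref{S^* and P^*} and \ref{defect space} and the explicit action (\ref{B}) of $B$ and $B^*$, whereas you reduce both intertwinings to the coordinate-free identities $D_{P^*}S=B^*D_{P^*}$ and $D_{P^*}P^{*n}S=B^*D_{P^*}P^{*n}+BD_{P^*}P^{*(n-1)}$ for $n\ge 1$, derived from $S=S^*P$ (the fundamental equation (\ref{Maa9}) with $D_P=0$), commutativity, $D_{P^*}P=0$, and Remark \ref{rmk1} applied to $(S^*,P^*)$ with $B$ in the role of $G$ --- all legitimately invoked and correctly combined. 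In effect you re-prove the abstract functional model theorem for pure $\Gamma$-isometries quoted from \cite{Sourav da's 2nd} just before the statement, rather than verify the model in coordinates; this buys brevity and generality (nothing in your argument uses $H^2(\mathbb{D}^2)$ beyond purity of $P$), while the paper's computation is self-contained at the level of this example and doubles as a concrete check of the formulas for $B$. On parts (b) and (c) the contrast is sharper: the paper only remarks that $U|_{H_\pm}$ is ``easily checked'' to be onto and that the rest repeats, implicitly relying on the separately computed operators $B_\pm$ of Definitions \ref{symm fund} and \ref{anti-symm fund}; your argument via the flip $J$ --- that $J$ commutes with $S$, $P$ and hence with $D_{P^*}$, that uniqueness of the fundamental operator forces $JBJ=B$, so $B$ reduces along $\mathcal{D}_{P^*}=\mathcal{D}_{P_+^*}\oplus\mathcal{D}_{P_-^*}$ with $B|_{\mathcal{D}_{P_\pm^*}}=B_\pm$, and that ontoness of $U|_{H_\pm}$ follows because a surjective unitary mapping each summand of an orthogonal decomposition into the corresponding summand of the target must map each onto it --- is actually more complete than what the paper writes, and it has the structural advantage of exhibiting $B_\pm$ as restrictions of $B$ instead of recomputing them from scratch.
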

\begin{proof}
\begin{itemize}
\item[(a)]
First note that the function $z \mapsto D_{P^*}(I-zP^*)^{-1}f$ is a holomorphic function on $\mathbb{D}$, for every $f \in H^2(\mathbb{D}^2)$. Its Taylor series expansion is
\begin{eqnarray}
&&D_{P^*}(I-zP^*)^{-1}f\nonumber
\\
&=&D_{P^*}(I +zP^*+z^2{P^*}^2+ \cdots)f \nonumber
\\
&=& D_{P^*}f + z D_{P^*}P^*f + z^2D_{P^*}{P^*}^2f + \cdots\nonumber
\\
&=& \label{matrixform} \begin{pmatrix}
a_{00} & a_{01} & a_{02} & \dots \\
a_{10} & 0 & 0 &\dots  \\
a_{20} & 0 & 0 & \dots \\
\vdots & \vdots & \vdots & \ddots
\end{pmatrix}
+
z\begin{pmatrix}
a_{11} & a_{12} & a_{13} & \dots \\
a_{21} & 0 & 0 &\dots  \\
a_{31} & 0 & 0 & \dots \\
\vdots & \vdots & \vdots & \ddots
\end{pmatrix}
+
z^2
\begin{pmatrix}
a_{22} & a_{23} & a_{24} & \dots \\
a_{32} & 0 & 0 &\dots  \\
a_{42} & 0 & 0 & \dots \\
\vdots & \vdots & \vdots & \ddots
\end{pmatrix}
+
\cdots .
\end{eqnarray}
To see that $U$ is an isometry, we do a norm computation.
\begin{eqnarray*}
\lVert Uf \rVert^2_{H^2_{\mathcal{D}_{P^*}}(\mathbb{D})}&=&\lVert D_{P^*}f\rVert^2_{\mathcal{D}_{P^*}} + \lVert D_{P^*}P^*f\rVert^2_{\mathcal{D}_{P^*}} + \lVert D_{P^*}{P^*}^2f\rVert^2_{\mathcal{D}_{P^*}} +\cdots
\\
&=& \lVert f\rVert^2 -\lim_{n \to \infty} \lVert {P^*}^nf \rVert^2 = \lVert f \rVert^2_{H^2(\mathbb{D}^2)}.\;\; [\text{ since $P$ is pure.}]
\end{eqnarray*}
From equation (\ref{matrixform}) it is easy to see that $U$ is onto $H^2_{\mathcal{D}_{P^*}}(\mathbb{D})$.
Therefore $U$ is unitary. We now show that
$U^*M_zU=P$ .
\begin{eqnarray*}\label{U^*M_{z}}
&&U^*M_zU
\begin{pmatrix}
a_{00} & a_{01} & a_{02} & \dots \\
a_{10} & a_{11} & a_{12} &\dots  \\
a_{20} & a_{21} & a_{22} & \dots \\
\vdots & \vdots & \vdots & \ddots
\end{pmatrix}\nonumber
\\
&=&
U^*
\left(
z\begin{pmatrix}
a_{00} & a_{01} & a_{02} & \dots \\
a_{10} & 0 & 0 &\dots  \\
a_{20} & 0 & 0 & \dots \\
\vdots & \vdots & \vdots & \ddots
\end{pmatrix}
+
z^2\begin{pmatrix}
a_{11} & a_{12} & a_{13} & \dots \\
a_{21} & 0 & 0 &\dots  \\
a_{31} & 0 & 0 & \dots \\
\vdots & \vdots & \vdots & \ddots
\end{pmatrix}
+
z^3\begin{pmatrix}
a_{22} & a_{23} & a_{24} & \dots \\
a_{32} & 0 & 0 &\dots  \\
a_{42} & 0 & 0 & \dots \\
\vdots & \vdots & \vdots & \ddots
\end{pmatrix}
+ \cdots
\right)\nonumber
\\
&&\hspace{1cm}=
\begin{pmatrix}
0 & 0 & 0 & 0 & \dots \\
0 & a_{00} & a_{01} & a_{02} & \dots \\
0 & a_{10} & a_{11} & a_{12} &\dots  \\
0 & a_{20} & a_{21} & a_{22} & \dots \\
\vdots & \vdots & \vdots & \vdots& \ddots
\end{pmatrix}
=
P
\begin{pmatrix}
a_{00} & a_{01} & a_{02} & \dots \\
a_{10} & a_{11} & a_{12} &\dots  \\
a_{20} & a_{21} & a_{22} & \dots \\
\vdots & \vdots & \vdots & \ddots
\end{pmatrix}.
\end{eqnarray*}
From the definition of $B$ (Definition \ref{fs fund}), one can easily find that for all $a_{j0}, a_{0j} \in \mathbb{C}, j=0,1,2,\dots$ with $ |a_{00}|^2 + \sum_{j=1}^\infty|a_{0j}|^2 + \sum_{j=1}^\infty|a_{j0}|^2 < \infty$,
\begin{eqnarray}\label{B}
B^*
\begin{pmatrix}
a_{00} & a_{01} & a_{02} & \dots \\
a_{10} & 0 & 0 &\dots  \\
a_{20} & 0 & 0 & \dots \\
\vdots & \vdots & \vdots & \ddots
\end{pmatrix}
=
\begin{pmatrix}
0 & a_{00} & a_{01}  & \dots \\
a_{00} & 0 & 0 &\dots  \\
a_{10} & 0 & 0 & \dots \\
\vdots & \vdots & \ddots
\end{pmatrix}.
\end{eqnarray}
To show that $U^*M_{B^*+zB}U=S$, we first calculate $M_{B^*+zB}U$.
\begin{eqnarray*}
&&
M_{B^*+zB}U
\begin{pmatrix}
a_{00} & a_{01} & a_{02} & \dots \\
a_{10} & a_{11} & a_{12} &\dots  \\
a_{20} & a_{21} & a_{22} & \dots \\
\vdots & \vdots & \vdots & \ddots
\end{pmatrix}
\\
&=&
M_{B^*+Bz}
\left(
\begin{pmatrix}
a_{00} & a_{01} & a_{02} & \dots \\
a_{10} & 0 & 0 &\dots  \\
a_{20} & 0 & 0 & \dots \\
\vdots & \vdots & \vdots & \ddots
\end{pmatrix}
+
z\begin{pmatrix}
a_{11} & a_{12} & a_{13} & \dots \\
a_{21} & 0 & 0 &\dots  \\
a_{31} & 0 & 0 & \dots \\
\vdots & \vdots & \vdots & \ddots
\end{pmatrix}
+
z^2\begin{pmatrix}
a_{22} & a_{23} & a_{24} & \dots \\
a_{32} & 0 & 0 &\dots  \\
a_{42} & 0 & 0 & \dots \\
\vdots & \vdots & \vdots & \ddots
\end{pmatrix}
+
\cdots
\right)
\\
&=&
M_{B^*}
\left(
\begin{pmatrix}
a_{00} & a_{01} & a_{02} & \dots \\
a_{10} & 0 & 0 &\dots  \\
a_{20} & 0 & 0 & \dots \\
\vdots & \vdots & \vdots & \ddots
\end{pmatrix}
+
z\begin{pmatrix}
a_{11} & a_{12} & a_{13} & \dots \\
a_{21} & 0 & 0 &\dots  \\
a_{31} & 0 & 0 & \dots \\
\vdots & \vdots & \vdots & \ddots
\end{pmatrix}
+
z^2\begin{pmatrix}
a_{22} & a_{23} & a_{24} & \dots \\
a_{32} & 0 & 0 &\dots  \\
a_{42} & 0 & 0 & \dots \\
\vdots & \vdots & \vdots & \ddots
\end{pmatrix}
+\cdots
\right)
\\
&+&
M_{B}
\left(
z\begin{pmatrix}
a_{00} & a_{01} & a_{02} & \dots \\
a_{10} & 0 & 0 &\dots  \\
a_{20} & 0 & 0 & \dots \\
\vdots & \vdots & \vdots & \ddots
\end{pmatrix}
+
z^2\begin{pmatrix}
a_{11} & a_{12} & a_{13} & \dots \\
a_{21} & 0 & 0 &\dots  \\
a_{31} & 0 & 0 & \dots \\
\vdots & \vdots & \vdots & \ddots
\end{pmatrix}
+
z^3\begin{pmatrix}
a_{22} & a_{23} & a_{24} & \dots \\
a_{32} & 0 & 0 &\dots  \\
a_{42} & 0 & 0 & \dots \\
\vdots & \vdots & \vdots & \ddots
\end{pmatrix}
+
\cdots
\right)
\\
&=&
\left(
\begin{pmatrix}
0 & a_{00} & a_{01}  & \dots \\
a_{00} & 0 & 0 &\dots  \\
a_{10} & 0 & 0 & \dots \\
\vdots & \vdots & \ddots
\end{pmatrix}
+
z
\begin{pmatrix}
0 & a_{11} & a_{12}  & \dots \\
a_{11} & 0 & 0 &\dots  \\
a_{21} & 0 & 0 & \dots \\
\vdots & \vdots & \ddots
\end{pmatrix}
+
z^2
\begin{pmatrix}
0 & a_{22} & a_{23}  & \dots \\
a_{22} & 0 & 0 &\dots  \\
a_{32} & 0 & 0 & \dots \\
\vdots & \vdots & \ddots
\end{pmatrix}
+
\cdots
\right)
\\
&+&
\left(
 z
\begin{pmatrix}
a_{10}+a_{01} & a_{02} & a_{03} & \dots \\
a_{20} & 0 & 0 &\dots  \\
a_{30} & 0 & 0 & \dots \\
\vdots & \vdots & \vdots & \ddots
\end{pmatrix}
+z^2
\begin{pmatrix}
a_{21}+a_{12} & a_{13} & a_{14} & \dots \\
a_{31} & 0 & 0 &\dots  \\
a_{41} & 0 & 0 & \dots \\
\vdots & \vdots & \vdots & \ddots
\end{pmatrix}\right.
\\
&+&
\left.z^3
\begin{pmatrix}
a_{32}+a_{23} & a_{24} & a_{25} & \dots \\
a_{42} & 0 & 0 &\dots  \\
a_{52} & 0 & 0 & \dots \\
\vdots & \vdots & \vdots & \ddots
\end{pmatrix}
+
\cdots\right)
\end{eqnarray*}
\begin{eqnarray*}
&=&
\begin{pmatrix}
0 & a_{00} & a_{01}  & \dots \\
a_{00} & 0 & 0 &\dots  \\
a_{10} & 0 & 0 & \dots \\
\vdots & \vdots & \ddots
\end{pmatrix}
+
z
\begin{pmatrix}
a_{10}+a_{01} & a_{11}+a_{02} & a_{12}+a_{03} & \dots \\
a_{20}+a_{11} & 0 & 0 & \dots  \\
a_{30}+a_{21} & 0 & 0 & \dots \\
\vdots & \vdots & \vdots & \ddots
\end{pmatrix}
\\
&+&
z^2
\begin{pmatrix}
 a_{21}+a_{12} & a_{22}+a_{13} & a_{23}+a_{14} & \dots  \\
 a_{31}+a_{22} & 0 & 0 & \dots \\
a_{41}+a_{32} & 0 & 0 & \dots \\
\vdots & \vdots & \vdots & \ddots
\end{pmatrix}
+
\cdots.
\end{eqnarray*}
Therefore
\begin{eqnarray}\label{U^*M_{B}}
&&U^*M_{B^*+Bz}U
\begin{pmatrix}
a_{00} & a_{01} & a_{02} & \dots \\
a_{10} & a_{11} & a_{12} &\dots  \\
a_{20} & a_{21} & a_{22} & \dots \\
\vdots & \vdots & \vdots & \ddots
\end{pmatrix}\nonumber
\\
&=&
U^*
\left(
\begin{pmatrix}
0 & a_{00} & a_{01}  & \dots \\
a_{00} & 0 & 0 &\dots  \\
a_{10} & 0 & 0 & \dots \\
\vdots & \vdots & \ddots
\end{pmatrix}
+
z
\begin{pmatrix}
a_{10}+a_{01} & a_{11}+a_{02} & a_{12}+a_{03} & \dots \\
a_{20}+a_{11} & 0 & 0 & \dots  \\
a_{30}+a_{21} & 0 & 0 & \dots \\
\vdots & \vdots & \vdots & \ddots
\end{pmatrix}\right.\nonumber
\\
&+&
\left.z^2
\begin{pmatrix}
 a_{21}+a_{12} & a_{22}+a_{13} & a_{23}+a_{14} & \dots  \\
 a_{31}+a_{22} & 0 & 0 & \dots \\
a_{41}+a_{32} & 0 & 0 & \dots \\
\vdots & \vdots & \vdots & \ddots
\end{pmatrix}
+
\cdots.
\right)\nonumber
\\
&=&
\begin{pmatrix}
0 & a_{00} & a_{01} & a_{02} & \dots \\
a_{00} & a_{10}+a_{01} & a_{11}+a_{02} & a_{12}+a_{03} & \dots \\
a_{10} & a_{20}+a_{11} & a_{21}+a_{12} & a_{22}+a_{13} &\dots  \\
a_{20} & a_{30}+a_{21} & a_{31}+a_{22} & a_{32}+a_{23} & \dots \\
\vdots & \vdots & \vdots & \vdots & \ddots
\end{pmatrix}
=
S\begin{pmatrix}
a_{00} & a_{01} & a_{02} & \dots \\
a_{10} & a_{11} & a_{12} &\dots  \\
a_{20} & a_{21} & a_{22} & \dots \\
\vdots & \vdots & \vdots & \ddots
\end{pmatrix}.
\end{eqnarray}
Therefore $S=U^*M_{B^*+Bz}U$.
\item[(b)] It can be easily checked that $U|_{H_+}$ is onto $H^2_{\mathcal{D}_{P_+^*}}(\mathbb{D})$ and $U|_{H_-}$ is onto $H^2_{\mathcal{D}_{P_-^*}}(\mathbb{D})$. The rest of the argument is as above.
\end{itemize}
\end{proof}

\end{document}